\def\titlerunning#1{\gdef\titrun{#1}}
\def\author#1{\gdef\autrun{\def\and{\unskip, }#1}\gdef\@author{#1}}
\def\address#1{{\def\and{\\\hspace*{18pt}}\renewcommand{\thefootnote}{}
\footnote {#1}}
\markboth{\autrun}{\titrun}}
\def\email#1{e-mail: #1}
\newtheorem{theorem}{Theorem}[section]
\newtheorem{lemma}[theorem]{Lemma}
\newtheorem{definition}[theorem]{Definition}
\newtheorem{proposition}[theorem]{Proposition}
\newtheorem{remark}[theorem]{Remark}
\newtheorem{corollary}[theorem]{Corollary}
\newtheorem{question}[theorem]{Question}
\newtheorem{example}[theorem]{Example}
\newcommand{\R}{\mathbb{R}}
\newcommand{\Proof}{\begin{proof}}
\newcommand{\End}{\end{proof}}
\numberwithin{equation}{section}
\newcommand{\PreserveBackslash}[1]{\let\temp=\\#1\let\\=\temp}
\newcolumntype{C}[1]{>{\PreserveBackslash\centering}p{#1}}
\newcolumntype{R}[1]{>{\PreserveBackslash\raggedleft}p{#1}}
\newcolumntype{L}[1]{>{\PreserveBackslash\raggedright}p{#1}}
\newcolumntype{I}{!{\vrule width 1pt}}
\newlength\savedwidth
\begin{document}


\baselineskip=15pt


\titlerunning{Asymptotic orbits in contact systems}

\title{Variational construction of connecting orbits between Legendrian graphs}

\author{Liang Jin \, Jun Yan \,and\, Kai Zhao}

\date{\today}

\maketitle

\address{Jin Liang: Department of Mathematics, Nanjing University of Science and Technology, Nanjing 210094, China; Fakult\"{a}t f\"{u}r Mathematik, Ruhr-Universit\"{a}t Bochum, Universit\"{a}tstra$\beta$e 150, D-44801 Bochum, Germany\\
\email{jl@njust.edu.cn; Liang.Jin@ruhr-uni-bochum.de}
  \and Jun Yan: School of Mathematical Sciences, Fudan University, Shanghai 200433, China;
  \email{yanjun@fudan.edu.cn}
  \and
  Kai Zhao:  School of Mathematical Sciences, Fudan University, Shanghai 200433, China;
  \email{zhao$\_$kai@fudan.edu.cn}}

\begin{abstract}
  Motivated by the problem of global stability of thermodynamical equilibria in non-equilibrium thermodynamics formulated in a recent paper \cite{EP}, we introduce some mechanisms for constructing semi-infinite orbits of contact Hamiltonian systems connecting two Legendrian graphs from the viewpoint of Aubry-Mather theory and weak KAM theory.
\end{abstract}

%


\section{Introduction}
\setcounter{equation}{0}
\setcounter{footnote}{0}

Let $M$ be a closed, connected smooth Riemannian manifold and $\Sigma=J^1 M=T^{\ast}M\times\R$ the manifold of 1-jets of functions on $M$. We use either $\sigma$ or the coordinates $(q,p,u)$ to denote points in $\Sigma$, where $(q,p)$ is the usual coordinates on $T^{\ast}M$ and $u\in\R$. The kernel of the Gibbs $1$-form $\alpha=du-pdq$ defines the standard contact structure $\xi\subset T\Sigma$, which makes $(\Sigma,\xi)$ into a canonical contact manifold. In the following context,
\begin{itemize}
  \item $\pi^u:\Sigma\rightarrow T^\ast M;\quad\sigma=(q,p,u)\mapsto(q,p)$ denotes the projection forgetting $u$-component,
  \item $\pi_q:\Sigma\rightarrow M;\quad\sigma=(q,p,u)\mapsto q$ denotes the projection onto $q$-component.
\end{itemize}

\subsection{Contact structure and classical thermodynamics}
According to V.I.Arnold \cite{Ar-Contact}, ``the first person who understood the significance of contact geometry for physics and thermodynamics'' is J.W.Gibbs. As is well known, see \cite[Part III, Chapter 1]{Zo-Book} and the references therein, Gibbs laid the foundation of classical thermodynamics by using only the functions of thermodynamic state, such as entropy and energy, as coordinates to describe the thermodynamic process, and then give the mathematical formulation of two principles of classical thermodynamics with the help of the 1-form $\alpha$: any equilibrium process corresponds to an oriented path $\gamma$ in $\Sigma$ and takes place in such a way that $\dot{\gamma}\in\ker(\alpha)=\xi$. From Gibbs' formulation, one naturally thinks of the sets of equilibrium states of a thermodynamic system as integral manifolds of the contact structure $\xi$, especially

\begin{definition}[Legendrian submanifolds]\label{lm}
An integral submanifold $\Lambda\subset\Sigma$ of $\xi$ whose dimension is maximal, i.e., dim $\Lambda=$ dim $M$, is called Legendrian. Furthermore, $\Lambda$ is called a Legendrian graph if $\pi_q|_{\Lambda}:\Lambda\rightarrow M$ is a diffeomorphism.
\end{definition}
It is necessary to

\begin{remark}\cite[Lecture 2, Theorem 4]{Ar-PDE}
Legendrian graphs coincide with 1-graphs of functions: for every Legendrian graph $\Lambda$, there is a function $u\in C^1(M,\R)$ such that
\[
\Lambda=\Lambda_{u}:=\{\,(q,d_q u(q), u(q)):\,q\in M\,\},
\]
and we shall use the notation $\Lambda_u$ if we want to emphasis the generating function.
\end{remark}

Therefore, $(\Sigma,\xi)$ serves as the phase space in the geometrical description of classical (or \textbf{equilibrium}) thermodynamics. After the fundamental works of Gibbs, the classical (or equilibrium) thermodynamics, which is the theory of properties of matter in a state of thermodynamics equilibrium and mainly deals with the reversible evolution of equilibrium states, became the study of contact geometry of the phase space.

\subsection{Contact Hamiltonian systems and non-equilibrium thermodynamics}
Unfortunately, systems found in nature are rarely in thermodynamic equilibrium, mainly due to the exchange of matter and energy with the environment and the chemical reactions inside the system. Does this mean that Gibbs' framework is nonsense to such systems? The discoveries of 20 century's thermodynamics probably give an answer from negative direction.

\vspace{0.5em}
For instance, physical experiments show that when a thermodynamic system in an equilibrium state undergoes a perturbation, it probably moves to a non-equilibrium state and then enter an interesting \textbf{relaxation process} that driving the system gradually returns to the \textbf{original} equilibrium. The past 30 years have witnessed a trend to interpret this relaxation processes via contact Hamiltonian flows, an analogy of Hamiltonian flow on contact manifolds \cite{B,EP,G,Rajeev,S}. One reason to choose such flows comes from the fact that they are transformations of $\Sigma$ preserving the contact structure $\xi$ and thus sets of equilibrium states (means that the flow transforms Legendrian submanifolds into Legendrian submanifolds) as well. As a consequence, the generating vector field $X$ satisfies the equation
\begin{equation}\label{contact trans}
\mathcal{L}_{X}\alpha=f\alpha,
\end{equation}
where $\mathcal{L}_{X}$ denotes the Lie derivative along the vector field $X$ and $f$ a nowhere vanishing function on $\Sigma$. It turns out that $X$ is uniquely determined by the function $H=i_X\alpha$, called \textbf{contact Hamiltonian}, on $\Sigma$, where $i_X$ denotes the inner product of a vector field and a 1-form. Set $\dot{q}=i_{X}dq,\,\dot{p}=i_{X}dp,\,\dot{u}=i_{X}du$, the equation \eqref{contact trans} reads in the coordinates $(q,p,u)$ as the \textbf{contact Hamiltonian system}
\begin{equation}\label{ch}
X:
\begin{cases}
\dot{q}=\frac{\partial H}{\partial p}(q,p,u),\\
\dot{p}=-\frac{\partial H}{\partial q}(q,p,u)-\frac{\partial H}{\partial u}(q,p,u)p,\\
\dot{u}=\frac{\partial H}{\partial p}(q,p,u)\cdot p-H(q,p,u).
\end{cases}
\end{equation}
In the following context, we shall use the notation $X_H$ instead of $X$ to emphasis the role of $H$, and the corresponding phase flow is denoted by $\varphi^t_H$. In particular, if the Hamiltonian is independent of $u$, then \eqref{ch} reduces to the classical Hamiltonian system.

\vspace{1em}
Once and for all, we assume there is a Legendrian submanifold
\begin{itemize}
  \item  $\Lambda_-\subset\Sigma$ coincides with the pre-assigned set of original equilibrium states.
\end{itemize}
The ingredients of the geometric model of relaxation process are included in the following
\begin{definition}\label{non-equilibrium}
For $H\in C^{\infty}(\Sigma,\R)$, the pair $(H,\Lambda_-)$ is called a non-equilibrium thermodynamic system (a system for short) if $H$ generates a \textbf{complete} phase flow $\varphi^{t}_H$ and $H|_{\Lambda_-}\equiv0$. It follows that $\Lambda_-$ is an invariant manifold under $\varphi^t_H$.
\end{definition}

\begin{remark}
In the language of non-equilibrium thermodynamics, the phase flow $\varphi^t_H$ represents the thermodynamic process, which can be chosen by determining the contact Hamiltonian $H$ \textbf{from different physical considerations}. However, the above definition indicates the choice of $H$ can not be arbitrary and should guarantee that the process preserves the set of original equilibrium states.
\end{remark}

\subsection{Prigogine's question on the global stability of thermodynamic equilibrium}
In his 1977 Nobel lecture \cite[Page 269]{Prigogine-Nobel}, Ilya.Prigogine attributed the local stability of thermodynamic equilibrium in the relaxation processes to the fact that $u$ (called thermodynamic potential) serves as a Lyapunov function near the equilibrium and then raise the global question:
\begin{itemize}
  \item \textbf{Can we extrapolate this stability property further away from equilibrium?}
\end{itemize}
This note is motivated by the latest work \cite{EP} of M.Entov and L.Polterovich, in which the authors reformulate Prigogine's question into a question concerning contact Hamiltonian dynamics:
\begin{question}\label{global stability}\cite[Section 3, Question 3.1]{EP}
Given a system $(H,\Lambda_-)$ and a subset $\Sigma_0\subset\Sigma$ of the phase space, does there exist an initial condition $\sigma_0\in\Sigma_0$ whose trajectory in the thermodynamical process generated by $H$ asymptotically converges to the equilibrium submanifold $\Lambda_-$?
\end{question}
In \cite{EP}, the authors also offered their answer to Question \ref{global stability} for the case that $\Sigma_0=\Lambda_0$ is a Legendrian submanifold under the assumption proposed by physicists working in non-equilibrium thermodynamics:
\begin{itemize}
  \item [\textbf{(H1)}] $\Lambda_-$ is a local attractor, i.e., there is a neighborhood $\mathcal{O}_-$ of $\Lambda_-$ in $\Sigma$ such that for any $\sigma\in\mathcal{O}_-$, the orbit $\{\varphi^t_H\sigma\,:\,t\geqslant0\}\subset\mathcal{O}_-$ and the set $\omega(\sigma)$ is a non-empty subset of $\Lambda_-$.
\end{itemize}
In fact, they constructed semi-infinte orbits of $\varphi^t_H$ starting from $\Lambda_0$ and converges to $\Lambda_-$ when $t$ goes to infinity. The methods used there is based on an existence mechanism, for finite time-length trajectories of \eqref{ch} between Legendrian submanifolds, called \textbf{interlinking} established from Legendrian Contact Homology in hard contact geometry \cite{EP0,EP1}.

\vspace{1em}
As is indicated in the last section, Hamiltonian system could be seen as a special case of \eqref{ch}. Topics related to understanding the chaotic behavior of orbits in Hamiltonian system lie at the centre in this field. For instance, one of the main goal of the celebrated Aubry-Mather theory \cite{M1,M2} is to answer

\vspace{1em}
\textit{``\cite{M2} whether there exists an orbit which in the infinite past tends to one region of phase space and in the infinite future tends to another region of phase space''} and \textit{``the possibility of finding an orbit which visits a prescribed sequence of regions of phase space in turn''}.

\vspace{1em}
Thus the construction of semi-infinite orbits asymptotic to certain invariant sets plays the role of building block for studying Hamiltonian dynamics from the above viewpoint. This fact also suggests the importance of Question 1.5 in the study of contact Hamiltonian dynamics.

\subsection{Main results}
In recent years, the study of contact Hamiltonian system from the viewpoint of Aubry-Mather theory \cite{M1,M2} and weak KAM theory \cite{Fathi_book} has fruitful consequences, including the proof of vanishing discount limit \cite{DFIZ} raised in the homogenization problem of Hamilton-Jacobi equations \cite{LPV}, and attracts much interests. In their ground breaking paper \cite{MS}, the authors focus on developing Aubry-Mather theory for certain contact Hamiltonian called conformally symplectic, i.e., $H(q,p,u)=\lambda u+h(q,p)$ with $\lambda>0$ being a constant. They successfully apply their results to investigate the global dissipative dynamics of the system.

\vspace{1em}
In the series of works \cite{WWY1}-\cite{WWY3}, the authors found an implicitly defined variational principle for general contact Hamiltonian systems, and use it to built the variational theory in the spirit of \cite{Fathi_book} and \cite{M1,M2}. In this note, we concentrate on the application of this theory to problems with more dynamical ingredients. Precisely, the aim of this note is to provide our answers to Question \ref{global stability} by methods developed in particular in \cite{WWY1}-\cite{WWY3}. With slight abuse of notation, we use $|\cdot|_q$ to denote the norm induced on the cotangent space $T^{\ast}_{q}M$. In the following context, we shall restrict ourselves to consider the case when
\begin{itemize}
  \item $\Lambda_-,\Lambda_0$ are Legendrian graphs, i.e., there is $u_-,u_0\in C^\infty(M,\R)$ such that
        \[
        \Lambda_-=\Lambda_{u_-},\quad \Lambda_0=\Lambda_{u_0}.
        \]
        It follows from Definition \ref{non-equilibrium} that $u_-$ is a smooth solution to the equation
        \begin{equation}\label{HJs}\tag{HJs}
        H(q,d_q u(q),u(q))=0,\quad\text{for any}\,\,q\in M.
        \end{equation}
\end{itemize}
and a non-equilibrium thermodynamics system $(H,\Lambda_-)$ with the Hamiltonian satisfying
\begin{itemize}
  \item [\textbf{(H2)}] $\frac{\partial^2 H}{\partial p^2}(q,p,u)$ is positive definite for every $(q,p,u)\in\Sigma$ and for every $(q,u)\in M\times\R$,
        \[
        \lim_{|p|_q\rightarrow+\infty}\frac{H(q,p,u)}{|p|_q}  \rightarrow+\infty.
        \]

\end{itemize}
Now we introduce the following
\begin{definition}\label{sub-super-intro}
A \textbf{sub\,(resp. super)-deformation} of $u_-$ is a function $V\in C^\infty(M\times[0,1],\R)$ such that
\begin{itemize}
  \item $V(\cdot,1)=u_-$,
  \item $H|_{\Lambda_{V(\cdot,s)}}<0\,(\text{resp.}\,>0)\quad\text{for all}\,\,s\in[0,1)$.
\end{itemize}
The name comes from the fact that for $s\in[0,1), V(\cdot,s)$ is a strict subsolution (resp. supersolution) to the equation \eqref{HJs}, see Definition \ref{vis} in Section 2.
\end{definition}

For $\sigma\in\Sigma, \omega(\sigma)$ denotes the omega-limit set of $\sigma$ under $\varphi^t_H$ (in general maybe empty!). The first result concerns the construction of semi-infinite orbits connecting $\Lambda_0$ to $\Lambda_-$ by using Definition \ref{sub-super-intro}.
\begin{theorem}\label{main1}
Let $(H,\Lambda_-)$ be a system with $H\in C^\infty(\Sigma,\R)$ satisfies \textbf{(H2)}. Assume one of the following conditions
\begin{enumerate}[$(a)$]
  \item there is a sub-deformation $V:M\times[0,1]\rightarrow\R$ of $u_-$ such that $V(\cdot,0)\leq u_0\leq u_-$,
  \item there is a super-deformation $V:M\times[0,1]\rightarrow\R$ of $u_-$ such that $V(\cdot,0)\geq u_0\geq u_-$,
  \item there are   a sub-deformation $\underline V:M\times[0,1]\rightarrow\R$ and a super-deformation $\overline V:M\times[0,1]\rightarrow\R$ of $u_-$ such that
   $$
  \underline  V(\cdot ,0 ) \leqslant \min\{ u_0, u_-\}, \quad \max\{ u_0, u_-\}   \leqslant \overline V(\cdot ,0 ) .
   $$
\end{enumerate}
then it follows that
\begin{enumerate}[(1)]
  \item $\Lambda_-\subset\overline{\cup_{t\geqslant0}\varphi^t_H(\Lambda_0)}$,
  \item and there is $\sigma_0\in\Lambda_0$ such that $\omega(\sigma_0)\subset\Lambda_-$.
\end{enumerate}
\end{theorem}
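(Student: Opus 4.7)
The plan is to translate Theorem~\ref{main1} into a statement about the Lax--Oleinik type semigroup $\{T^-_t\}_{t\ge 0}$ associated with the evolutionary contact Hamilton--Jacobi equation $\partial_t w+H(q,d_q w,w)=0$, as developed in \cite{WWY1}--\cite{WWY3}, and then to exploit the sub/super-deformations to pin down $u_-$ as the common asymptotic profile of orbits issued from $\Lambda_0$. Under (H2) the semigroup $T^-_t$ acts on $C(M,\mathbb{R})$ with the following properties: (i) $u_-$ is a fixed point (since $u_-$ solves \eqref{HJs}); (ii) $\varphi\le\psi$ implies $T^-_t\varphi\le T^-_t\psi$; (iii) for any strict subsolution (resp.\ supersolution) $\varphi$ of \eqref{HJs} the map $t\mapsto T^-_t\varphi$ is pointwise non-decreasing (resp.\ non-increasing); and (iv) at every $q\in M$ where $T^-_t\varphi$ is differentiable, the lift $(q,d_q T^-_t\varphi(q),T^-_t\varphi(q))$ equals $\varphi^t_H(\sigma)$ for some $\sigma\in\Lambda_\varphi$ arising as the endpoint of a minimising characteristic.

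I treat case (a) in detail; cases (b) and (c) follow from symmetric or combined sandwiches. By (i)--(ii), the chain $V(\cdot,0)\le u_0\le u_-$ is preserved:
\[
T^-_t V(\cdot,0)\ \le\ T^-_t u_0\ \le\ u_-,\qquad t\ge 0,
\]
and by (iii) the left-hand term is non-decreasing in $t$, hence $T^-_t V(\cdot,0)\nearrow V^{\ast}_0$ uniformly on $M$ to a viscosity solution of \eqref{HJs} satisfying $V(\cdot,0)\le V^{\ast}_0\le u_-$. The crucial claim is $V^{\ast}_0\equiv u_-$. For each $s\in[0,1)$ define $V^{\ast}_s$ in the same way, set $V^{\ast}_1:=u_-$, and let
\[
S:=\{s\in[0,1]\,:\,V^{\ast}_s\equiv u_-\}.
\]
Then $1\in S$, and I will show that $S$ is both open and closed in $[0,1]$; connectedness then forces $S=[0,1]$, in particular $V^{\ast}_0=u_-$. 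Closedness follows from a diagonal argument using the $C^\infty$-continuity of $s\mapsto V(\cdot,s)$ together with the Lipschitz dependence of $T^-_t$ on the initial datum over bounded time intervals. Openness at $s_0\in S$ uses the smooth perturbation from $V(\cdot,s_0)$ to $V(\cdot,s)$, the openness of the set of strict subsolutions in $C^1$, and uniform-in-$t$ comparison estimates of $T^-_t$ against the fixed point $u_-$ provided by the weak KAM tools of \cite{WWY1}--\cite{WWY3}, to propagate $V^{\ast}_{s_0}=u_-$ to a whole neighbourhood of $s_0$ in $[0,1]$.

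With $V^{\ast}_0=u_-$ in hand, the sandwich gives the uniform convergence $T^-_t u_0\to u_-$ as $t\to+\infty$. For~(1), fix $q\in M$; by semi-concavity of $T^-_t u_0$ it is differentiable on a dense set, and (iv) furnishes, for each large $t$, an initial point $q_t\in M$ with
\[
\varphi^t_H\bigl(q_t,\,d_{q_t}u_0(q_t),\,u_0(q_t)\bigr)=\bigl(q,\,p_t,\,T^-_t u_0(q)\bigr),
\]
where $p_t\to d_q u_-(q)$ and $T^-_t u_0(q)\to u_-(q)$, so the right-hand side converges into $\Lambda_-$; a density/continuity argument extends this to every $q\in M$, proving (1). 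For~(2), fix $q_\ast\in M$ and run the construction along a sequence $t_n\to+\infty$; compactness of $M$ produces an accumulation point $q_\infty$ of $\{q_{t_n}\}$, and $\sigma_0:=(q_\infty,d_{q_\infty}u_0(q_\infty),u_0(q_\infty))\in\Lambda_0$ is checked to satisfy $\omega(\sigma_0)\subset\Lambda_-$ by combining continuity of $\varphi^t_H$ in the initial datum with the uniform semigroup convergence already established.

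The main obstacle is the openness half of the argument for $S$: ruling out that the asymptotic profile $V^{\ast}_s$ drops strictly below $u_-$ under an arbitrarily small perturbation of the initial datum requires quantitative semigroup estimates that are \emph{uniform in} $t\ge 0$, not merely on bounded intervals. This is precisely where the refined weak KAM machinery of \cite{WWY1}--\cite{WWY3} for the implicit variational principle is indispensable; without some such input the argument would be forced to rely on a local-attractor hypothesis like (H1), which is explicitly \emph{not} imposed in Theorem~\ref{main1}.
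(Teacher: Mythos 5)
Your overall architecture matches the paper's: both reduce Theorem~\ref{main1} to showing (i) the semigroup convergence $T^-_t u_0\to u_-$ uniformly on $M$, and then (ii) use that convergence, together with the action-minimizing characteristics of Theorem~\ref{global-chm} and a compactness/continuous-dependence argument, to produce the connecting and asymptotic orbits. But the heart of step (i) is handled very differently, and the route you chose does not close.

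The paper does \emph{not} try to prove that $\lim_{t\to\infty}T^-_t V(\cdot,s)=u_-$ for every $s$, nor anything resembling your set $S=\{s:V^\ast_s\equiv u_-\}$ being open and closed. Instead it runs a single monotone limit and a touching argument. By Proposition~\ref{mono1}, since $V(\cdot,0)$ is a subsolution, $t\mapsto T^-_t V(\cdot,0)$ is non-decreasing, and by comparison with $u_-$ it is bounded above; hence $v_-:=\lim_{t\to\infty}T^-_t V(\cdot,0)$ exists and is automatically a fixed point, $v_-\in\mathcal{S}_-$, with $V(\cdot,0)\le v_-\le u_-$. If $v_-\neq u_-$, one then chooses the largest homotopy parameter $s_0$ with $V(\cdot,s_0)\le v_-$; since $V(\cdot,1)=u_-\nleq v_-$, one has $s_0\in[0,1)$, so $V(\cdot,s_0)$ is a \emph{strict} subsolution, and there is a touching point $q_0$ with $V(q_0,s_0)=v_-(q_0)$. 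Then Proposition~\ref{mono1} gives, for $t>0$, $T^-_t v_-(q_0)\ge T^-_t V(q_0,s_0)>V(q_0,s_0)=v_-(q_0)$, contradicting $v_-\in\mathcal{S}_-$. No continuity of the infinite-time limit in the initial datum is ever used. Your connectedness strategy requires precisely that continuity — you need both that $V^\ast_s$ is well-defined for every $s$ (not obvious, since there is no a priori bound $V(\cdot,s)\le u_-$ for $s\in(0,1)$) and that $s\mapsto V^\ast_s$ is continuous. You candidly flag the openness half as the obstacle; that is indeed a genuine gap, and in general the map $v\mapsto\lim_{t\to\infty}T^-_t v$ is \emph{not} continuous, so this cannot be patched with ``Lipschitz dependence over bounded time intervals'' (which addresses only finite-time propagation). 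The closedness half has the same defect. So the key lemma is missing.

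A smaller but real gap is your treatment of the momentum convergence. In proving (1) you assert $p_t\to d_q u_-(q)$ and in (2) you say the inclusion $\omega(\sigma_0)\subset\Lambda_-$ is ``checked'' from continuity plus uniform convergence. Uniform convergence $T^-_t u_0\to u_-$ only controls the $u$-coordinate; it does not by itself control the $p$-coordinate of characteristic endpoints, especially since $T^-_t u_0$ is generally not differentiable at the relevant endpoint $q_n(n)$ (and the argument needs \emph{every} $q\in M$, not just points of differentiability). The paper establishes this as a separate Claim, by a corner argument: if $\bar p\neq d_q u_-(\bar q)$, one concatenates a minimizer of $h_{q_{-1},u_-(q_{-1})}(\bar q,1)$ with a minimizer of $h_{\bar q,u_-(\bar q)}(q_1,1)$; the resulting curve has a corner at $\bar q$ and hence, by $C^1$-regularity of minimizers (Proposition~\ref{Implicit variational}) and the Markov property (Proposition~\ref{fundamental-prop}), cannot be the minimizer of $h_{q_{-1},u_-(q_{-1})}(q_1,2)=u_-(q_1)$, giving a strict inequality that contradicts $u_-\in\mathcal{S}_-$. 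An approach via equi-semiconcavity and a density-of-differentiability argument might be assembled, but as written you neither invoke the relevant estimate nor supply the extension to non-differentiability points, so this step should be spelled out rather than asserted.
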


\begin{remark}
Conclusion (1) means that every point on the set of equilibria can be approximated by some finite time-length trajectories of the thermodynamic process initiating from $\Lambda_0$.
\end{remark}

If the local stability of $\Lambda_-$ is assumed, then the conditions (a) and (b) can be replaced by some condition depending only on $0$-jets. This is included in
\begin{theorem}\label{main2}
Let $(H,\Lambda_-)$ be a system with $H\in C^\infty(\Sigma,\R)$ satisfies \textbf{(H1)}-\textbf{(H2)}. Assume one of the following conditions
\begin{enumerate}[(a)]
  \item[$(a')$] there is a continuous function $V:M\times[0,1]\rightarrow\R$ such that $V(\cdot ,0)\in C^\infty(M,\R)$ with $H|_{\Lambda_{V(\cdot,0)}}<0$ and
        \[
        H(q,d_q u_-(q),V(q,s))<0,  \quad \forall q\in M \quad   \text{and}\quad V(\cdot,0) \leq u_0\leq u_-=V(\cdot,1),
        \]

  \item[$(b')$] there is a continuous function $V:M\times[0,1]\rightarrow\R$ such that $V(\cdot ,0)\in C^\infty(M,\R)$ with $H|_{\Lambda_{V(\cdot,0)}}>0$ and
      \[
      H(q,d_q u_-(q),V(q,s)) >0,  \quad \forall q\in M \quad   \text{and}\quad V(\cdot,0) \geq u_0\geq u_-=V(\cdot,1),
      \]

  \item [$(c')$] there are continuous functions $\underline{V}, \overline{V}:M\times[0,1]\rightarrow\R$ such that $\underline{V}(\cdot,0), \overline{V}(\cdot,0)\in C^\infty(M,\R)$ with $H|_{\Lambda_{\underline{V}(\cdot,0)}}<0, H|_{\Lambda_{\overline{V}(\cdot,0)}}>0$ and
        \begin{align*}
        & H(q,d_q u_-(q),\underline{V}(q,s))<0,\quad\quad H(q,d_q u_-(q),\overline{V}(q,s))>0,\\
        &\underline{V}(q,0)\leqslant\min\{u_0(q), u_-(q)\}, \quad\,\max\{u_0(q), u_-(q)\}  \leqslant\overline{V}(q,0).
        \end{align*}
\end{enumerate}
is satisfied, then there is $\sigma_0\in\Lambda_0$ such that $\omega(\sigma_0)\subset\Lambda_-$.
\end{theorem}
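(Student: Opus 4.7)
The strategy is to reduce cases $(a')$, $(b')$, $(c')$ to the corresponding cases $(a)$, $(b)$, $(c)$ of Theorem \ref{main1}, with the local attractor hypothesis \textbf{(H1)} serving as a ``safety net'' that allows one to weaken the requirement of a smooth sub/super-deformation all the way to $u_-$. Focusing on $(a')$, the concrete aim is to manufacture, out of the merely continuous family $V$ satisfying the ``height-only'' condition $H(q, d_q u_-(q), V(q, s)) < 0$, a genuine smooth sub-deformation $\tilde V : M \times [0, 1] \to \R$ in the sense of Definition \ref{sub-super-intro}; the endpoint $\tilde V(\cdot, 1)$ need only land on a Legendrian graph inside the attraction basin $\mathcal{O}_-$, not on $\Lambda_-$ itself, because \textbf{(H1)} completes the asymptotic job from there.

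\textbf{Construction of $\tilde V$.} I would choose a smooth cutoff $\rho : [0, 1] \to [0, 1]$ with $\rho(0) = 0$ and $\rho(1) = 1$ and interpolate the slopes linearly by setting $d_q \tilde V(q, s) = (1 - \rho(s)) d_q V(q, 0) + \rho(s) d_q u_-(q)$, then select heights $\tilde V(q, s)$ slightly below the convex combination of $V(q, 0)$ and $u_-(q)$. Convexity of $H$ in $p$ from \textbf{(H2)} gives
\begin{equation*}
H\bigl(q, d_q \tilde V(q, s), \tilde V(q, s)\bigr) \leq (1 - \rho(s)) H\bigl(q, d_q V(q, 0), \tilde V(q, s)\bigr) + \rho(s) H\bigl(q, d_q u_-(q), \tilde V(q, s)\bigr),
\end{equation*}
and both terms on the right can be made strictly negative by locating $\tilde V(q, s)$ in the open set where the first is negative (near $V(q, 0)$, by openness of $H|_{\Lambda_{V(\cdot, 0)}} < 0$) and the second is negative (as guaranteed by the hypothesis along the continuous path $V(q, s)$). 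After mollifying the continuous data in the $s$-variable and using a partition of unity on $M$ to glue local choices of the heights, one obtains a smooth $\tilde V$ whose terminal Legendrian graph $\Lambda_{\tilde V(\cdot, 1)}$ sits inside $\mathcal{O}_-$, by taking the perturbation small enough that $C^1$-closeness to $u_-$ forces proximity to $\Lambda_-$.

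\textbf{Conclusion and anticipated obstacles.} Once $\tilde V$ is built, a suitably adapted version of Theorem \ref{main1}(a) --- with the endpoint being a strict subsolution landing in $\mathcal{O}_-$ rather than the exact solution $u_-$ --- produces $\sigma_0 \in \Lambda_0$ whose forward orbit eventually enters $\mathcal{O}_-$; then invariance of $\omega(\sigma_0)$ and \textbf{(H1)} force $\omega(\sigma_0) \subset \Lambda_-$. Cases $(b')$ and $(c')$ are treated by the dual construction of smooth super-deformations and by combining both. The principal technical obstacle I expect is the second step of the construction: absent any assumed monotonicity of $H$ in $u$, the two strict inequalities $H(q, d_q V(\cdot, 0), \tilde V(q, s)) < 0$ and $H(q, d_q u_-(q), \tilde V(q, s)) < 0$ cut out a $q$-dependent and $s$-dependent admissible window for the heights, and patching these windows into a single globally smooth $\tilde V$ with the prescribed boundary values requires a delicate simultaneous mollification and partition-of-unity argument. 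A more subtle secondary point is reconciling the ordering $\tilde V(\cdot, 1) \geq u_0$ needed to invoke Theorem \ref{main1}(a) with the requirement that $\Lambda_{\tilde V(\cdot, 1)} \subset \mathcal{O}_-$; this may force one to replay the variational mechanism of Theorem \ref{main1}'s proof in the slightly more general setting where the terminating Legendrian graph is a strict subsolution rather than the given equilibrium $\Lambda_-$.
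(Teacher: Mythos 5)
Your plan --- manufacture a genuine smooth sub-deformation $\tilde V$ out of the continuous family $V$, then invoke a strengthened Theorem \ref{main1} --- does not match the paper's strategy, and it runs into obstructions that I do not see a way around.

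\textbf{First gap: the convexity estimate is not applicable.} Your inequality
\[
H\bigl(q, d_q \tilde V(q,s), \tilde V(q,s)\bigr) \leq (1-\rho(s))\,H\bigl(q, d_q V(q,0), \tilde V(q,s)\bigr) + \rho(s)\,H\bigl(q, d_q u_-(q), \tilde V(q,s)\bigr)
\]
requires \emph{both} right-hand terms to be controlled. Hypothesis $(a')$ gives strict negativity of $H(q,d_q u_-(q),V(q,s))$ for all $s$, which handles the second term near the path of heights. But the \emph{first} term $H(q,d_q V(q,0),\tilde V(q,s))$ is only constrained at $s=0$ (where $\tilde V(q,0)=V(q,0)$ and $H|_{\Lambda_{V(\cdot,0)}}<0$). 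For $s$ near $1$, $\tilde V(q,s)$ is forced to be near $u_-(q)$, and there is no hypothesis whatsoever on the sign of $H(q,d_q V(q,0),u)$ for $u$ near $u_-(q)$. The set the paper calls condition $(a')$ is deliberately weaker than $(a)$: it is a ``$0$-jet'' condition precisely because there is no smooth sub-deformation implied by it, only the height family plus a single endpoint $1$-jet. So the admissible window you describe can genuinely be empty; mollification and partitions of unity cannot save a construction whose pointwise inequality already fails.

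\textbf{Second gap: the ordering is lost.} If $\tilde V(\cdot,1)$ is a strict subsolution landing in $\mathcal O_-$, then in particular $\tilde V(\cdot,1)<u_-$, and the hypothesis $u_0\le u_-$ gives no bound $u_0\le\tilde V(\cdot,1)$. So the reduction to a Theorem \ref{main1}(a)-type statement does not apply; you acknowledge this but propose to ``replay the variational mechanism,'' which is exactly where the content of the paper's proof lives, and it is not a slight generalization.

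\textbf{What the paper actually does.} There is no construction of a sub-deformation. Instead the argument splits into Theorem \ref{homotopy2}, which studies the large-time behaviour of the semigroup, and Theorem \ref{graph-graph2}, which converts that into an orbit statement using \textbf{(H1)}. Under $(a')$, since $V(\cdot,0)$ is a subsolution, $t\mapsto T^-_t V(\cdot,0)$ is monotone and converges uniformly to some $v_\ast\in\mathcal S_-$ with $V(\cdot,0)\le v_\ast\le u_-$. Then one argues by contradiction at a maximizer $q_0$ of $u_- - v_\ast$: picking $s_0$ with $V(q_0,s_0)=v_\ast(q_0)$, the hypothesis $H(q_0,d_q u_-(q_0),V(q_0,s_0))<0$ is fed into a short-time variational estimate along a minimizer of $T^-_t v_\ast(q_0)$, producing a strict decrease of $u_- - v_\ast$ contrary to maximality; so $v_\ast=u_-$, hence $u_\ast=\liminf T^-_t u_0=u_-$ and the contact set is all of $M$. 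Theorem \ref{graph-graph2} then minimizes $T^-_{t_n}u_0 - u_-$ over $M$, uses semi-concavity of $T^-_{t_n}u_0$ to extract matching first derivatives at the minimizer, shows the resulting points on the $1$-graph of $T^-_{t_n}u_0$ converge into $\Lambda_-$ and hence eventually into $\mathcal O_-$, and closes with Theorem \ref{global-chm} to pull back to $\Lambda_0$ and \textbf{(H1)} to conclude $\omega(\sigma_0)\subset\Lambda_-$. Case $(b')$ is handled by a dual local estimate along the contact flow starting from $(q_0,\partial L/\partial\dot q(q_0,\dot q_0,v_\ast(q_0)),v_\ast(q_0))$, and $(c')$ squeezes $u_0$ between the two. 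This route exploits the $0$-jet hypothesis directly inside the variational principle rather than trying to upgrade it to a $1$-jet (graph) condition, which is exactly the upgrade your construction cannot perform.
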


\begin{remark}
The conditions $(a)-(c)$, $(a')-(c')$ listed above are stated in a homotopy flavor. They give examples, in our informal opinion, of ``weak'' version of interlink property employed in \cite{EP} that are more easy to verify directly on the contact Hamiltonian.
\end{remark}

\subsection{Organization of the paper}
The remaining of this paper is organized as follows. In Section 2, we briefly recall some necessary tools from \cite{WWY1}-\cite{WWY3} and give an extension of characteristic theory, which is crucial in our proof of Theorem \ref{main1}. Section 3 is devoted to the construction of semi-infinite orbit asymptotically converges to $\Lambda_-$ when the solution semigroup associated to the evolutionary Hamilton-Jacobi equation converges. In Section 4, our homotopy criteria are verified to guarantee the convergence of the solution semigroup with initial data $u_0$. We also illustrate our results on some examples, including some generalizations of those from \cite{EP}, in the last section.

\section{Global characteristics theory via variational methods}
To extend the characteristic theory to the global setting, we recall the variational methods developed for evolutionary Hamilton-Jacobi equation (including viscosity solutions theory) and the associated contact Hamiltonian system. A global version of characteristic theory is obtained by showing that viscosity solutions propagate along action minimizing orbits of \eqref{ch}. Notice that many objects discussed in this section is non-smooth in the classical viewpoint, thus necessary smoothness to guarantee the validity of definitions and theorems is presented in an accurate way.

\vspace{1em}
The classical characteristics theory connects the \textbf{local solvability} of the Cauchy problem of the evolutionary Hamilton-Jacobi equation
\begin{equation}\label{HJe}\tag{HJe}
\begin{cases}
\partial_t U+H(q,\partial_q U,U)=0,\quad\, (q,t)\in M\times(0,+\infty),\\
\hspace{5.4em}U(\cdot,0)=v,\hspace{2.65em}q\in M,
\end{cases}
\end{equation}
to the study of contact Hamiltonian system \eqref{ch} near $\Lambda_v$, here $v\in C^2(M)$ is a smooth initial data. More precisely, if one assume $U:M\times[0,+\infty)\rightarrow\R$ is a $C^2$ solution to \eqref{HJe}, then every trajectory $\varphi^t_H \sigma:=(q(t),p(t),u(t)), t\geq0$ of \eqref{ch}, called \textbf{characteristic}, starting from $\sigma\in\Lambda_{v}$ satisfies the identities
\begin{equation}\label{ch-sol}
p(t)=\partial_q U(q(t),t),\quad u(t)=U(q(t),t).
\end{equation}
Equivalently, for every $t\geq0, \{\varphi^t_H\sigma:\sigma\in\Lambda_v\}=\Lambda_{U(\cdot,t)}$. Following this spirit, one arrives at

\begin{theorem}\cite[Lecture 2, Theorem 3]{Ar-PDE} or \cite[Chapter 3, Theorem 2]{Evans-PDE}\label{local-chm}
For any $v\in C^2(M,\R)$, there are $\delta>0$ and a solution $U\in C^2(M\times[0,\delta],\R)$ to \eqref{HJe}, so that
\begin{enumerate}[(1)]
  \item for any $t\in[0,\delta], \pi_q\circ\varphi^t_H:\Lambda_v\rightarrow M$ is a diffeomorphism,
  \item any characteristic segment $\varphi^t_H\sigma=(q(t),p(t),u(t)), \sigma\in\Lambda_v, t\in[0,\delta]$ satisfies the identities \eqref{ch-sol}.
\end{enumerate}
\end{theorem}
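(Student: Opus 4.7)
The plan is to run the classical method of characteristics, exploiting three inputs: the flow $\varphi^t_H$ is smooth in $(t,\sigma)$, the initial Legendrian graph $\Lambda_v = \{(q, d_q v(q), v(q))\}$ projects diffeomorphically onto $M$, and contact Hamiltonian flows preserve the contact distribution $\xi = \ker\alpha$ (hence send Legendrian submanifolds to Legendrian submanifolds).

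First I would establish conclusion $(1)$. At $t=0$, the map $\pi_q\circ\varphi^0_H = \pi_q|_{\Lambda_v}: \Lambda_v\to M$ is a diffeomorphism by the definition of a Legendrian graph. Differentiating the flow in $\sigma$ along $\Lambda_v$, the map $d(\pi_q\circ\varphi^t_H)$ depends continuously on $t$ and is invertible at every point of $\Lambda_v$ for $t=0$. Since $M$ (hence $\Lambda_v$) is compact, there exists $\delta_1>0$ such that $\pi_q\circ\varphi^t_H$ is a local diffeomorphism for all $t\in[0,\delta_1]$, and an openness argument (uniform control of the size of injectivity balls on a compact manifold, together with injectivity at $t=0$) yields a possibly smaller $\delta\in(0,\delta_1]$ for which it is a global diffeomorphism onto $M$.

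Next I would define $U$ pointwise by inversion: for $(q,t)\in M\times[0,\delta]$, let $\sigma(q,t)$ be the unique point of $\Lambda_v$ with $\pi_q\varphi^t_H\sigma(q,t)=q$, and set
\[
U(q,t) = u\circ\varphi^t_H\sigma(q,t),
\]
where $u$ denotes the third component. The implicit function theorem, applied to the $C^\infty$ flow and the $C^2$ parametrization $q\mapsto(q,d_q v(q),v(q))$ of $\Lambda_v$, gives $\sigma\in C^2$, whence $U\in C^2(M\times[0,\delta],\R)$.

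It remains to verify conclusion $(2)$ and the Hamilton-Jacobi equation itself. Because $\varphi^t_H$ preserves $\xi$, the image $\varphi^t_H(\Lambda_v)$ is Legendrian for each $t$, and by step one it projects diffeomorphically onto $M$, so it is a Legendrian graph; Arnold's remark in the introduction identifies it with $\Lambda_{U(\cdot,t)}$, giving $p(t)=\partial_q U(q(t),t)$ and $u(t)=U(q(t),t)$, which is exactly \eqref{ch-sol}. To obtain the PDE, differentiate $u(t)=U(q(t),t)$ along a characteristic and compare with the third equation of \eqref{ch}:
\[
\partial_t U(q(t),t) + \partial_q U(q(t),t)\cdot\dot q(t) = \dot u(t) = \partial_p H\cdot p(t) - H(q(t),p(t),u(t)).
\]
Using $\dot q=\partial_p H$ and $p(t)=\partial_q U(q(t),t)$ the $\partial_p H\cdot p$ terms cancel, leaving $\partial_t U + H(q,\partial_q U,U) = 0$ at every point $(q(t),t)$; as $\sigma$ varies over $\Lambda_v$ and $t$ over $[0,\delta]$, the points $(q(t),t)$ sweep out all of $M\times[0,\delta]$, so the identity holds everywhere. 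The initial condition $U(\cdot,0)=v$ is immediate from $\varphi^0_H=\mathrm{id}$.

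The only non-routine step is controlling the size of $\delta$ so that $\pi_q\circ\varphi^t_H$ remains a \emph{global} diffeomorphism (not merely a local one), but compactness of $M$ together with the uniform continuity of $d\varphi^t_H$ in $t$ handles this; the rest is bookkeeping around the chain rule and the preservation of the contact distribution.
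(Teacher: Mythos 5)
The paper does not prove Theorem \ref{local-chm}; it is quoted verbatim from Arnold \cite{Ar-PDE} and Evans \cite{Evans-PDE}, so there is no ``paper's own proof'' to compare against. Your sketch is the standard method-of-characteristics argument those references use, and the overall structure --- establish that $\pi_q\circ\varphi^t_H|_{\Lambda_v}$ stays a diffeomorphism for short time by $C^1$-closeness to the identity, define $U$ by pulling back the $u$-component along the inverted flow, identify $\varphi^t_H(\Lambda_v)$ with $\Lambda_{U(\cdot,t)}$ via preservation of the contact distribution, and read off the PDE by differentiating along characteristics --- is correct.

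One bookkeeping point is off. You assert that the 1-jet parametrization $q\mapsto(q,d_q v(q),v(q))$ is $C^2$; with $v\in C^2$ it is only $C^1$, so the implicit function theorem alone delivers $\sigma(q,t)\in C^1$ and hence $U\in C^1$, not $C^2$. The $C^2$ regularity of $U$ is genuine but comes from the Legendrian-graph identity you derive two lines later: once you know $\varphi^t_H(\Lambda_v)=\Lambda_{U(\cdot,t)}$, the gradient $\partial_q U(q,t)$ equals the $p$-component of the inverted characteristic, which is itself $C^1$ in $(q,t)$, giving $U$ continuous second $q$-derivatives; then $\partial_t U=-H(q,\partial_q U,U)$ is $C^1$, giving the remaining second derivatives. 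So the regularity upgrade logically depends on your step two, not on the implicit function theorem --- you should reverse the order of inference there, but nothing else needs to change.
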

Notice that in general, given a smooth initial data $v$, Theorem \ref{local-chm} only allow us to construct smooth solution to \eqref{HJe} locally. The reason comes from the fact that, after the projection by $\pi_q$, the characteristics starting from $\Lambda_v$ may intersect at some large $t$. Thus even for smooth initial data $v$, there does not exist a global solution $U\in C^2(M\times[0,\infty),\R)$ to \eqref{HJe}. To construct solutions to \eqref{HJe}, it is necessary to extend the notion of `solutions' to include non-smooth functions. The right one, namely \textbf{viscosity solution}, was firstly introduced by M.Crandall and P.L.Lions in \cite{CL}, and is now widely accepted as the natural framework for the theory of Hamilton-Jacobi equations and certain second order PDEs.

\begin{definition}\label{vis}
A continuous function $u:M\rightarrow\R$ is called a viscosity subsolution (resp. supersolution) of \eqref{HJs} if for any $q\in M$ and $\phi\in C^{1}(M,\R)$ such that $u-\phi$ attains a local maximum (resp. minimum) at $q$,
\begin{equation}\label{sub-super}
H(q,d_{q}\phi(q),\phi(q))\leq(\text{resp.}\geq)\,\,0;
\end{equation}
$u$ is called a viscosity solution if it is both a viscosity sub and supersolution of \eqref{HJs}. Moreover, a viscosity subsolution is said to be \textbf{strict} if the inequality $\leqslant$ \eqref{sub-super} is replaced by $<$ at any $q\in M$.

\vspace{1em}
A continuous function $U: M\times[0,\infty)\rightarrow\R$ is called a viscosity subsolution (resp. supersolution) of \eqref{HJe} if $U(\cdot,0)\leq(\text{resp.}\geq)\,\,v$ on $M$ and for any $(q,t)\in M\times(0,+\infty)$ and $\Phi$ a $C^{1}$ function defined on a neighborhood of $(q,t)$ such that $U-\Phi$ attains a local maximum (resp. minimum) at $(q,t)$,
\[
\partial_{t}\Phi(q,t)+H(q,\partial_{q}\Phi(q,t),U(q,t))\leq(\text{resp.}\geq)\,\,0;
\]
$U$ is called a viscosity solution if it is both a viscosity sub and supersolution of \eqref{HJe}.
\end{definition}
From now on, solutions to \eqref{HJs} and \eqref{HJe} are always understood in the viscosity sense. Now we begin to give a brief summary of results in \cite{WWY1}-\cite{WWY3} concerning the variational part of the theory of viscosity solutions to \eqref{HJe} and \eqref{HJs}.

\subsection{A variational principle associated to \eqref{HJe}}
Let $TM$ denote the tangent bundle of $M$. A point of $TM$ will be denoted by $(q,\dot{q})$, where $q\in M$ and $\dot{q}\in T_qM$. Recall that $p\in T^{\ast}_{q}M$ is a linear form on $T_{q}M$, we use $\langle\cdot,\cdot\rangle$ to denote the canonical pairing between tangent and cotangent bundle. For a contact Hamiltonian $H\in C^3(\Sigma,\R)$ satisfying \textbf{(H2)}, we define the corresponding Lagrangian $L:TM\times\R\rightarrow\R$ by
\[
L(q,\dot{q},u)=\sup_{p \in T_q^{\ast}M}\{\langle p,\dot{q}\rangle-H(q,p,u)\},
\]
i.e., $L$ is the convex dual of $H$ with respect to $p$. The following action functions provide a formulation of the variational principle defined by the equation \eqref{HJe}. Notice that the action function is \textbf{implicitly} defined since $H$ depends on the $u$-variable. In \cite{CCWY}\cite{CCJWY}, the authors show that Hoglotz' variational principle also is a effective tool.
\begin{proposition}\cite[Theorem 2.1, 2.2]{WWY3}\label{Implicit variational}
Given any $(q_0,u_0)\in M\times\R$, there exist two continuous functions $h_{q_0,u_0}(q,t)$ and $h^{q_0,u_0}(q,t)$ called the \textbf{backward} and \textbf{forward} action function respectively, defined on $M\times (0,+\infty)$ by
\begin{align}\label{eq:Implicit1}
h_{q_0,u_0}(q,t)=&\inf_{\substack{\gamma(t)=q\\ \gamma(0)=q_0 } }\Big\{u_0+\int_0^t L(\gamma(\tau), \dot \gamma(\tau),h_{q_0,u_0}(\gamma(\tau) ,\tau )  )\ d\tau\Big\},\\ \label{eq:Implicit2}
h^{q_0,u_0}(q,t)=&\sup_{\substack{\gamma(t)=q_0\\ \gamma(0)=q } }\Big\{u_0-\int_0^t L(\gamma(\tau), \dot \gamma(\tau),h^{q_0,u_0}(\gamma(\tau) ,t-\tau )  )\ d\tau\Big\},
\end{align}
where the infimum and supremum are taken among Lipschitz continuous curves $\gamma:[0,t]\rightarrow M$ and are achieved. Moreover, if $\gamma_1$ and $\gamma_2$ achieve the infimum in \eqref{eq:Implicit1} and supremum in \eqref{eq:Implicit2} respectively, then $\gamma_1,\gamma_2\in C^1([0,t],M)$. Set
\begin{align*}
&q_1(\tau):=\gamma_1(\tau), \quad u_1(\tau):=h_{q_0,u_0}(\gamma_1(\tau),\tau), \hspace{3em} p_1(\tau):=\frac{\partial L}{\partial \dot q}(\gamma_1(\tau),\dot{\gamma}_1(\tau),u_1(\tau)),\\
&q_2(\tau):=\gamma_2(\tau), \quad u_2(\tau):=h^{q_0,u_0}(\gamma_2(\tau),t-\tau)), \quad p_2(\tau):=\frac{\partial L}{\partial \dot q}(\gamma_2(\tau),\dot{\gamma}_2(\tau),u_2(\tau)),
\end{align*}
then $(q_1(\tau),p_1(\tau),u_1(\tau))$ and $(q_2(\tau),p_2(\tau),u_2(\tau))$ satisfy \eqref{ch} with
\begin{align*}
&q_1(0)=q_0,\quad q_1(t)=q,\quad \lim_{\tau\to 0^+ }u_1(\tau)=u_0,\\
&q_2(0)=q,\quad q_2(t)=q_0,\quad \lim_{\tau\to t^-}u_2(\tau)=u_0.
\end{align*}
\end{proposition}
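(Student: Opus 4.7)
The plan is to convert the implicit formula \eqref{eq:Implicit1} into a classical Tonelli-type variational problem by parametrizing the $u$-coordinate along each admissible curve through an auxiliary ODE, and then combining the direct method with a Gr\"onwall comparison to handle the coupling between the endpoint value and the curve. Since $H\in C^{3}$ satisfies \textbf{(H2)}, its Legendre dual $L(q,\dot q,u)=\sup_{p}\{\langle p,\dot q\rangle-H(q,p,u)\}$ is $C^{2}$, strictly convex and superlinear in $\dot q$, and locally Lipschitz in $u$. Hence, for every absolutely continuous curve $\gamma:[0,t]\to M$ with $\gamma(0)=q_0$, the Carath\'eodory initial value problem
\begin{equation*}
\dot u_{\gamma}(\tau)=L(\gamma(\tau),\dot\gamma(\tau),u_{\gamma}(\tau)),\qquad u_{\gamma}(0)=u_0,
\end{equation*}
admits a unique non-explosive solution on $[0,t]$. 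Setting $h_{q_0,u_0}(q,t):=\inf\{u_{\gamma}(t):\gamma(0)=q_0,\ \gamma(t)=q\}$ and integrating the ODE shows that any minimizer automatically realizes \eqref{eq:Implicit1}, so the implicit formulation is consistent with this definition.

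Existence of a minimizer would then follow from Tonelli's direct method. For a minimizing sequence $\{\gamma_n\}$, a Gr\"onwall bound applied to the $u$-ODE confines $\{u_{\gamma_n}\}$ to a common bounded range, and then superlinearity of $L$ in $\dot q$, uniform on bounded $(q,u)$-sets, yields equi-integrability of the speeds via de la Vall\'ee Poussin; Arzel\`a--Ascoli extracts a uniform limit $\gamma_\ast$ with $\dot\gamma_n\rightharpoonup\dot\gamma_\ast$ weakly in $L^{1}$. The delicate point is lower semicontinuity of $\gamma\mapsto u_{\gamma}(t)$: because $u_\gamma$ depends nonlinearly on the full history of $\gamma$, the classical convex lower-semicontinuity theorem does not apply directly. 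I would first iterate the Gr\"onwall estimate against the uniform convergence $\gamma_n\to\gamma_\ast$ to show $u_{\gamma_n}\to u_{\gamma_\ast}$ uniformly on $[0,t]$, which effectively decouples the $u$-slot in the integrand, and then apply the standard convex lower-semicontinuity result to $\int_0^{t}L(\gamma,\dot\gamma,u_{\gamma_\ast}(\tau))d\tau$ to conclude $u_{\gamma_\ast}(t)\leq\liminf_n u_{\gamma_n}(t)$.

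Given a minimizer $\gamma_1$, a variational argument perturbing $\gamma_1$ on a short subinterval with fixed endpoints and differentiating the coupled ODE in the perturbation parameter produces the \emph{contact Euler--Lagrange} equation
\begin{equation*}
\frac{d}{d\tau}\frac{\partial L}{\partial\dot q}=\frac{\partial L}{\partial q}+\frac{\partial L}{\partial u}\cdot\frac{\partial L}{\partial\dot q},
\end{equation*}
and strict convexity of $L$ in $\dot q$ then upgrades $\gamma_1$ to $C^{1}$; applying the Legendre transform $p_1:=\frac{\partial L}{\partial\dot q}(\gamma_1,\dot\gamma_1,u_1)$ converts this equation into the contact Hamiltonian system \eqref{ch} with the stated boundary data, including $\lim_{\tau\to 0^+}u_1(\tau)=u_0$ from the ODE initial condition. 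Continuity of $h_{q_0,u_0}$ on $M\times(0,+\infty)$ is obtained by comparing minimizers at nearby endpoints via short connecting arcs and another Gr\"onwall comparison. The forward action $h^{q_0,u_0}$ is treated symmetrically by maximizing with the time-reversed ODE $\dot u_\gamma(\tau)=-L(\gamma,\dot\gamma,u_\gamma)$. I expect the lower-semicontinuity step described in the previous paragraph to be the main obstacle, and the Gr\"onwall bootstrap sketched there to be the key technical ingredient resolving it.
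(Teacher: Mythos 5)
The paper does not actually prove this proposition: it is imported verbatim from \cite{WWY3} (with the Herglotz reformulation attributed to \cite{CCWY,CCJWY}), so there is no in-paper proof to compare against. Your strategy — trading the implicit fixed-point formula for the Herglotz variational problem $\dot u_\gamma = L(\gamma,\dot\gamma,u_\gamma)$, $u_\gamma(0)=u_0$, and minimizing $u_\gamma(t)$ — is a recognized alternative to the short-time Picard iteration used in \cite{WWY1,WWY3}, and it is in fact the route taken in \cite{CCWY,CCJWY}. So the overall plan is sound and genuinely different from the reference the paper cites.

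However, the step you yourself flag as the main obstacle is not resolved by the argument you propose. You claim that iterating Gr\"onwall against the uniform convergence $\gamma_n\to\gamma_\ast$ gives $u_{\gamma_n}\to u_{\gamma_\ast}$ uniformly. This is false: Gr\"onwall controls $|u_{\gamma_n}-u_{\gamma_\ast}|$ only in terms of $\int_0^t|L(\gamma_n,\dot\gamma_n,u_{\gamma_\ast})-L(\gamma_\ast,\dot\gamma_\ast,u_{\gamma_\ast})|\,d\tau$, and since $\dot\gamma_n\rightharpoonup\dot\gamma_\ast$ only \emph{weakly} in $L^1$, this integral need not vanish. A concrete counterexample is $L(q,\dot q,u)=\tfrac12|\dot q|^2$ with $\gamma_n$ fast oscillations around a constant $\gamma_\ast$: then $\dot\gamma_n\rightharpoonup 0$, yet $u_{\gamma_n}(t)-u_{\gamma_\ast}(t)=\tfrac12\int_0^t|\dot\gamma_n|^2\,d\tau$ stays bounded below by a positive constant. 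Your proposal is also internally inconsistent: if $u_{\gamma_n}\to u_{\gamma_\ast}$ uniformly held, lower semicontinuity would be trivial (indeed one would have continuity) and the subsequent appeal to ``the standard convex l.s.c.\ result'' would be redundant. The correct mechanism, as in \cite{CCJWY}, must invoke convexity of $L$ in $\dot q$ (e.g.\ the supporting-hyperplane inequality $L(\gamma_n,\dot\gamma_n,u)\geq L(\gamma_\ast,\dot\gamma_\ast,u)+\partial_{\dot q}L(\gamma_\ast,\dot\gamma_\ast,u)\cdot(\dot\gamma_n-\dot\gamma_\ast)+\cdots$) to handle the weakly convergent velocities, with Gr\"onwall used only afterwards to close the loop in the $u$-slot; Gr\"onwall alone cannot replace the convexity argument.

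A second, smaller gap: ``integrating the ODE shows that any minimizer automatically realizes \eqref{eq:Implicit1}'' only verifies that the Herglotz value function is attained by $\gamma_1$ as a term of the right-hand side of \eqref{eq:Implicit1}; it does not show that the infimum of $u_0+\int_0^t L(\gamma,\dot\gamma,h(\gamma(\tau),\tau))\,d\tau$ over \emph{all} admissible $\gamma$ is bounded below by $h(q,t)$, which is the nontrivial half of the fixed-point identity. Establishing that the Herglotz value function solves the implicit equation (and that the implicit equation has a unique solution) is itself one of the principal theorems of \cite{WWY1,CCJWY}, not a corollary of the ODE formulation.
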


As a direct consequence of Proposition \ref{Implicit variational}, we obtain
\begin{corollary}\label{Minimality}
Given $q_0,q\in M, u_0\in\R$ and $t>0$, set $(q(\tau),p(\tau),u(\tau))=\varphi^\tau_H(q(0),p(0),u(0))$ and
\[
S^{q,t}_{q_0,u_0}=\big\{(q(\tau),p(\tau),u(\tau)), \tau\in [0,t]\,:\, q(0)=q_0,q(t)=q,u(0)=u_0\big\},
\]
\[
S^{q_0,u_0}_{q,t}=\big\{(q(\tau),p(\tau),u(\tau)), \tau\in [0,t]\,:\,q(0)=q, q(t)=q_0, u(t)=u_0\big\},
\]
then for any $ (q,t)\in M\times(0,+\infty)$,
\begin{equation}\label{eq:inf}
h_{q_0,u_0}(q,t)=\inf\,\{u(t):(q(\tau),p(\tau),u(\tau))\in S^{q,t}_{q_0,u_0}\},
\end{equation}
\begin{equation}
h^{q_0,u_0}(q,t)=\sup \{u(0):(q(\tau),p(\tau),u(\tau))\in S^{q_0,u_0}_{q,t}\}.
\end{equation}
\end{corollary}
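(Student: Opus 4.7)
The plan is to derive both identities from Proposition \ref{Implicit variational} by splitting each into two inequalities, using that minimizers of the implicit variational principle lift to trajectories of \eqref{ch} while, conversely, along any trajectory of \eqref{ch} the $u$-component is governed by the Lagrangian $L$.

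For the ``$\geq\inf$'' direction of the first identity, Proposition \ref{Implicit variational} provides a $C^1$ minimizer $\gamma_1$ of \eqref{eq:Implicit1} whose lift $(q_1(\tau),p_1(\tau),u_1(\tau))$ solves \eqref{ch} with $q_1(0)=q_0$, $q_1(t)=q$, $\lim_{\tau\to 0^+}u_1(\tau)=u_0$ and $u_1(t)=h_{q_0,u_0}(\gamma_1(t),t)=h_{q_0,u_0}(q,t)$. This trajectory belongs to $S^{q,t}_{q_0,u_0}$, so the infimum on the right-hand side of \eqref{eq:inf} is at most $h_{q_0,u_0}(q,t)$.

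For the converse, take any $(q(\tau),p(\tau),u(\tau))\in S^{q,t}_{q_0,u_0}$. The first equation of \eqref{ch}, combined with strict convexity of $H$ in $p$ from \textbf{(H2)}, gives via the Legendre transform $\langle p,\dot q\rangle-H(q,p,u)=L(q,\dot q,u)$, so the third equation of \eqref{ch} reduces to the scalar Cauchy problem
\begin{equation*}
\dot u(\tau)=L(q(\tau),\dot q(\tau),u(\tau)),\qquad u(0)=u_0.
\end{equation*}
With $\gamma(\tau):=q(\tau)$, the function $u(\cdot)$ is the \emph{unique} solution of this ODE, which is precisely the quantity over which $h_{q_0,u_0}(q,t)$ is minimized (the ODE reformulation of \eqref{eq:Implicit1} underlying Proposition \ref{Implicit variational} in \cite{WWY3}). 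Hence $h_{q_0,u_0}(q,t)\leq u(t)$, and taking the infimum over trajectories closes the first identity. The second identity is obtained by a symmetric argument: integrate the analogous $u$-ODE backwards from $\tau=t$, and appeal to \eqref{eq:Implicit2} in place of \eqref{eq:Implicit1}.

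The only delicate point is the parenthetical remark above: one has to know that the implicit infimum in \eqref{eq:Implicit1} can be re-expressed as an infimum over the endpoints of the Cauchy problem $\dot w=L(\gamma,\dot\gamma,w)$, $w(0)=u_0$, for \emph{arbitrary} Lipschitz $\gamma$ from $q_0$ to $q$, not only for the distinguished minimizer produced in Proposition \ref{Implicit variational}. This equivalence hinges on the unique solvability of the $u$-ODE, which follows from the Lipschitz dependence of $L$ on $u$ granted by the $C^3$-smoothness of $H$, and is already built into the variational construction of $h_{q_0,u_0}$ in \cite{WWY3}. Once this is accepted, the corollary is a direct re-reading of the variational principle.
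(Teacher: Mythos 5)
Your argument is correct, and since the paper asserts the corollary only as a ``direct consequence'' of Proposition \ref{Implicit variational} without supplying a proof, the useful comparison is with what that assertion tacitly rests on. Your first inequality is immediate: the lift of the $C^1$ minimizer furnished by Proposition \ref{Implicit variational} is an element of $S^{q,t}_{q_0,u_0}$ whose $u$-endpoint equals $h_{q_0,u_0}(q,t)$, so the infimum cannot exceed $h_{q_0,u_0}(q,t)$. The converse inequality, that $h_{q_0,u_0}(q,t)\leq u(t)$ for \emph{every} element of $S^{q,t}_{q_0,u_0}$, genuinely does not follow from the bare statement of Proposition \ref{Implicit variational}; it requires, as you correctly isolate, the equivalence between the implicit formula \eqref{eq:Implicit1} and its explicit reformulation $h_{q_0,u_0}(q,t)=\inf_\gamma w_\gamma(t)$, where $w_\gamma$ solves the Carath\'eodory problem $\dot w=L(\gamma,\dot\gamma,w)$, $w(0)=u_0$, with $\gamma$ ranging over all Lipschitz curves from $q_0$ to $q$. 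Granting that, a trajectory in $S^{q,t}_{q_0,u_0}$ produces the competitor $\gamma=q(\cdot)$ with $w_\gamma=u(\cdot)$ (the third line of \eqref{ch} together with the Legendre identity gives $\dot u=L(q,\dot q,u)$), whence $h_{q_0,u_0}(q,t)\leq u(t)$, and the two inequalities close the identity; the forward case is the time-reversed mirror. That implicit-to-explicit equivalence is precisely what the fixed-point and comparison arguments of \cite{WWY1,WWY3} establish when they show \eqref{eq:Implicit1} is well-posed, so your proof is sound, and flagging that dependency is the right move: it is the one nontrivial step that the paper's phrasing suppresses.
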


\vspace{0.5em}
We collect some fundamental properties of the action functions here, which are frequently used in the later context. For details and proofs of these properties, we refer to the paper \cite{WWY2}.
\begin{proposition}{\cite{WWY2}}\label{fundamental-prop}
The backward and forward action functions satisfy
\begin{enumerate}
    \item[(1)]\textbf{($u_0$-monotonicity)} Given $q_0\in M, u_1<u_2\in\R$,\,\,for all $(q,t)\in M\times(0,\infty)$,
    \[
	h_{q_0,u_1}(q,t)< h_{q_0,u_2}(q,t),\quad h^{q_0,u_1}(q,t)< h^{q_0,u_2}(q,t).
    \]

	\item[(2)]\textbf{(Markov property)} Given $(x_0,u_0)\in M\times\R$,\,\,for all $t,\tau>0$ and $q\in M$,
	\begin{equation}\label{markov}
    \begin{split}
	&h_{q_0,u_0}(q, t+\tau)=\inf_{q_1\in M}h_{q_1,h_{q_0,u_0}(q_1,t)}(q,\tau),\\
    &h^{q_0,u_0}(q, t+\tau)=\sup_{q_1\in M}h^{q_1,h^{q_0,u_0}(q_1,t)}(q,\tau).
    \end{split}
	\end{equation}
	Moreover, the infimum is attained at $q_1$ if and only if there exists a $C^1$ minimizer $\gamma$ of $h_{q_0,u_0}(q,t+\tau)$ with $ \gamma(t)=q_1$, the supremum is attained at $q_1$ if and only if there exists a $C^1$ minimizer $\gamma$ of $h^{q_0,u_0}(q,t+\tau)$ with $ \gamma(t)=q_1$.

	\item[(3)]\textbf{(Lipschitz continuity)} The functions
    \[
    (q_0,u_0,q,t)\mapsto h_{q_0,u_0}(q,t),\quad (q_0,u_0,q,t)\mapsto h^{q_0,u_0}(q,t)
    \]
    are locally Lipschitz continuous on the domain $M\times\R\times M\times(0,+\infty)$.
\end{enumerate}
\end{proposition}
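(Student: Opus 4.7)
The plan is to prove the three items in sequence, leveraging both the implicit variational formula of Proposition \ref{Implicit variational} and its dynamical interpretation in Corollary \ref{Minimality}.

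For item $(1)$, the strategy is to compare the two functions $U_i(q,t):=h_{q_0,u_i}(q,t)$, viewed as viscosity solutions of \eqref{HJe} carrying the distinct ``initial'' values $u_1<u_2$ concentrated at $q_0$. A direct variational route: given a $C^1$ minimizer $\gamma^\ast$ for $h_{q_0,u_2}(q,t)$, Proposition \ref{Implicit variational} together with the Legendre transform shows that $w_2(\tau):=h_{q_0,u_2}(\gamma^\ast(\tau),\tau)$ satisfies the scalar ODE $\dot w=L(\gamma^\ast,\dot\gamma^\ast,w)$ with $w_2(0)=u_2$. Letting $w_1$ be the solution of the same ODE with $w_1(0)=u_1$, the Lipschitz dependence of $L$ on its $u$-argument on compact sets (a consequence of \textbf{(H2)}) and Gronwall yield $w_1(\tau)<w_2(\tau)$ on $[0,t]$. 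A fixed-point argument along $\gamma^\ast$, exploiting the monotone dependence of the functional in \eqref{eq:Implicit1} on the profile $\tau\mapsto h$, then upgrades this to $h_{q_0,u_1}(q,t)\leq w_1(t)<w_2(t)=h_{q_0,u_2}(q,t)$. The forward case is symmetric.

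For item $(2)$, I would run the classical dynamic programming argument adapted to the implicit functional. Let $\gamma$ join $q_0$ to $q$ over duration $t+\tau$, set $q_1:=\gamma(t)$, and split $\gamma$ at time $t$; additivity of the action integral, together with reading off $h_{q_0,u_0}(q_1,t)$ from the first piece (which uses the self-consistency of the implicit definition), shows that the second piece is a competitor for $h_{q_1,h_{q_0,u_0}(q_1,t)}(q,\tau)$. Taking infima gives one inequality; the reverse follows by concatenating near-minimizers of the two halves and invoking the Lipschitz continuity proven in item $(3)$ to pass to the limit in the intermediate $u$-argument. The ``attained iff there is a $C^1$-minimizer through $q_1$'' characterization is a Weierstrass gluing: the concatenation of two minimizers is itself a minimizer for the full problem iff its one-sided derivatives agree at $q_1$, and this is forced by the implicit Euler--Lagrange equation together with the uniqueness of the resulting characteristic in \eqref{ch}.

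For item $(3)$, the main obstacle is the self-referential definition of $h$, which blocks a direct appeal to classical Tonelli theory. My plan has four steps: $(i)$ establish uniform $C^1$ bounds on minimizers over compact subsets of $M\times\R\times M\times(0,+\infty)$, using the superlinearity in \textbf{(H2)} together with the a priori boundedness of the action value on the compact set, via a simultaneous bootstrap on the minimizer's speed and on $h$; $(ii)$ obtain Lipschitz dependence on $q_0$ and $q$ by reparametrizing an optimal curve to match perturbed endpoints at a bounded extra action cost; $(iii)$ obtain Lipschitz dependence in $t$ from the Markov property of $(2)$ combined with short-time estimates; $(iv)$ obtain Lipschitz dependence in $u_0$ by combining $(1)$ with a Gronwall estimate on the ODE $\dot w=L(\gamma,\dot\gamma,w)$ along a fixed minimizer. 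The hardest step is $(i)$: the a priori bound on the minimizer depends on $L$, which in turn depends on the implicit profile of $h$ along the curve; the bootstrap must simultaneously control the minimizer's $C^1$-norm and the range of admissible values of $h$ on the compact set under consideration.
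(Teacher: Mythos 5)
The paper does not prove Proposition~\ref{fundamental-prop}: it is stated as a recollection of established results and the reader is referred to \cite{WWY2} for the proofs. So there is no ``paper's own proof'' to compare yours against, and I can only judge the proposal on its own merits.

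Your overall architecture --- ODE comparison along a common minimizer for item (1), dynamic programming plus smooth gluing for item (2), and a simultaneous bootstrap on minimizer speed and $h$-range for item (3) --- is the right shape and matches what the WWY construction actually does. One step, however, needs to be made honest. In item (1) you reduce to showing $h_{q_0,u_1}(q,t)\le w_1(t)$, where $w_1$ solves $\dot w = L(\gamma^\ast,\dot\gamma^\ast,w)$, $w_1(0)=u_1$, along the minimizer $\gamma^\ast$ for the $u_2$-problem, and you wave at ``a fixed-point argument exploiting the monotone dependence of the functional in \eqref{eq:Implicit1} on the profile $\tau\mapsto h$.'' As written, \eqref{eq:Implicit1} has $h_{q_0,u_1}(\gamma^\ast(\tau),\tau)$ inside the integral, not $w_1(\tau)$, and comparing $\int L(\gamma^\ast,\dot\gamma^\ast,h)$ with $\int L(\gamma^\ast,\dot\gamma^\ast,w_1)$ requires a sign condition on $\partial L/\partial u$ that is nowhere assumed --- only smoothness (hence local Lipschitz continuity) in $u$ is available under \textbf{(H2)}. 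The argument is rescued if you use the sharper form of the implicit principle underlying \eqref{eq:Implicit1}: for each fixed Lipschitz $\gamma$ from $q_0$ to $q$ there is a unique Carath\'eodory solution $w_\gamma$ of $w_\gamma(\tau)=u_0+\int_0^\tau L(\gamma,\dot\gamma,w_\gamma)\,ds$, and $h_{q_0,u_0}(q,t)=\inf_\gamma w_\gamma(t)$. With that characterization, $w_1=w_{\gamma^\ast}$ for initial value $u_1$ is directly an admissible value, so $h_{q_0,u_1}(q,t)\le w_1(t)$, and $w_1<w_2$ follows from uniqueness of the scalar ODE rather than from any monotonicity. You should state and use this ``value along a fixed curve'' lemma explicitly; as it stands, the ``fixed-point argument'' is exactly the content that is missing. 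A secondary nitpick: the local Lipschitz dependence of $L$ on $u$ comes from smoothness of $H$ and compactness, not from \textbf{(H2)} per se. Items (2) and (3) are sound in outline; for (3i) note that the a priori bound on $h$ that feeds the bootstrap should be sourced from Corollary~\ref{Minimality} together with completeness of the flow, to avoid circularity with the continuity being proved.
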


It turns out that any solution to \eqref{HJe} can be expressed by action functions. The representation involves some families of nonlinear operators which we now introduce.

\begin{definition}[Solution semigroups]\label{semi-group}
For each $v\in C(M,\R)$ and $(q,t)\in M\times(0,+\infty)$, define
\begin{equation}\label{eq:Tt-+ rep}
\begin{split}
T^{-}_t v(q):=\inf_{q_0\in M}h_{q_0,v(q_0)}(q,t),\\
T^{+}_t v(q):=\sup_{q_0\in M}h^{q_0,v(q_0)}(q,t).
\end{split}
\end{equation}
In addition, we set $T^{\pm}_0 v(q)=v(q)$, then for $t\geq0,\,\,T^{\pm}_t: v\mapsto T^{\pm}_t v$ maps $C(M,\R)$ to itself.
\end{definition}

The above definition allow us to deduce some properties of solutions semigroups from Proposition \ref{fundamental-prop} as corollaries. In particular, we have
\begin{proposition}\label{prop-sg}\cite[Proposition 4.3]{WWY2}
Two families of operator $\{T^{\pm}_t\}_{t\geqslant0}$ defined above satisfy
\begin{enumerate}
    \item[(1)]\textbf{(monotonicity)} For initial data $v,v'\in C(M,\R)$ with $v<v'$ (resp. $v\leqslant v'$) on $M$, then for all $q\in M$,
    \begin{equation}\label{mono-sg}
	T^{\pm}_t v(q)<T^{\pm}_t v'(q),\quad\text{resp.}\,\,(T^{\pm}_t v(q)\leqslant T^{\pm}_t v'(q)).
    \end{equation}

	\item[(2)]\textbf{(Semigroup property)} For any $t,\tau\geqslant0$,
    \begin{equation}\label{sg}
    T^{\pm}_{t+\tau}=T^{\pm}_{t}\circ T^{\pm}_{\tau},
    \end{equation}
    so that the families of operators $\{T^{\pm}_t\}_{t\geq0}$ form two \textbf{semigroups} acting on $C(M,\R)$.

	\item[(3)]\textbf{(Continuity 1)} For any $(q,t)\in M\times(0,+\infty)$, the functions
    \[
    (q,t)\mapsto T^{\pm}_t v(q),
    \]
    are locally Lipschitz continuous  and $\lim_{t\rightarrow0^+}T^{\pm}_t v(q)=v(q)$ for all $q\in M$.

    \item[(4)]\textbf{(Continuity 2)} For any $t\geqslant0$, the maps
    \[
    v\mapsto T^{\pm}_t v
    \]
    are continuous with respect to $\|\cdot\|_\infty$ defined on $C(M,\R)$.
\end{enumerate}

\end{proposition}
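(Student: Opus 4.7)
The plan is to derive all four properties from the corresponding properties of the action functions assembled in Proposition \ref{fundamental-prop}, using crucially that on the compact manifold $M$ the infimum (resp.\ supremum) defining $T^\pm_t v(q)$ is attained at some $q_0 \in M$ by continuity of $h_{q_0, v(q_0)}(q,t)$ in $q_0$ together with compactness.

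For monotonicity, the non-strict version is immediate from $u_0$-monotonicity in Proposition \ref{fundamental-prop}(1) by taking $\inf_{q_0}$ or $\sup_{q_0}$. For the strict version with $v<v'$, I would pick $q_0^*$ realizing $T^-_t v'(q)=h_{q_0^*,v'(q_0^*)}(q,t)$ and chain strict $u_0$-monotonicity with the trivial bound: $T^-_t v'(q)=h_{q_0^*,v'(q_0^*)}(q,t)>h_{q_0^*,v(q_0^*)}(q,t)\geqslant T^-_t v(q)$; the $T^+_t$ case is symmetric. For the semigroup property, set $f(q_1):=T^-_t v(q_1)$, which is continuous. Using the Markov property (Proposition \ref{fundamental-prop}(2)),
\[
T^-_{t+\tau}v(q)=\inf_{q_0}\inf_{q_1}h_{q_1,h_{q_0,v(q_0)}(q_1,t)}(q,\tau)=\inf_{q_1}\inf_{q_0}h_{q_1,h_{q_0,v(q_0)}(q_1,t)}(q,\tau).
\]
Since $u\mapsto h_{q_1,u}(q,\tau)$ is monotone increasing by (1) and the inner infimum over $q_0$ is attained on compact $M$, monotonicity lets me pull the infimum inside: the inner expression equals $h_{q_1,f(q_1)}(q,\tau)$, giving $T^-_{t+\tau}v(q)=\inf_{q_1}h_{q_1,T^-_t v(q_1)}(q,\tau)=T^-_\tau(T^-_t v)(q)$. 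The argument for $T^+_t$ is dual.

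Local Lipschitz continuity of $(q,t)\mapsto T^\pm_t v(q)$ for $t>0$ follows from Proposition \ref{fundamental-prop}(3): on any compact subset of $M\times(0,\infty)$ the family $\{(q,t)\mapsto h_{q_0,v(q_0)}(q,t)\}_{q_0\in M}$ is uniformly Lipschitz, and a pointwise infimum of a uniformly Lipschitz family is Lipschitz. Continuity of $v\mapsto T^\pm_t v$ in sup-norm likewise follows from Lipschitz dependence of $h_{q_0,u_0}$ on $u_0$, since $\|v-v'\|_\infty$ controls the $u_0$-input uniformly in $q_0$. The delicate step is the initial-time limit $T^\pm_t v\to v$ as $t\to 0^+$: here I would derive short-time asymptotics $h_{q_0,u_0}(q,t)=u_0+tL(q_0,\dot{q}_0,u_0)+o(t)$ from the Euler--Lagrange structure of the minimizers produced in Proposition \ref{Implicit variational}, then combine with the superlinearity in \textbf{(H2)} to force the minimizing $q_0$ in $T^-_t v(q)$ to concentrate near $q$ as $t\to 0^+$, so that $T^-_t v(q)=v(q)+o(1)$.

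The main obstacle I anticipate is the semigroup property, because the inf-through identity $\inf_{q_0}h_{q_1,h_{q_0,v(q_0)}(q_1,t)}(q,\tau)=h_{q_1,\inf_{q_0}h_{q_0,v(q_0)}(q_1,t)}(q,\tau)$ requires simultaneously strict monotonicity in $u$, continuity of the action in $u$, and attainment of the inner infimum, none of which is automatic once $H$ depends on $u$. The short-time initial-data limit inside Continuity 1 is a close second in subtlety, as it really depends on the Tonelli regularity of minimizers rather than on the abstract semigroup formalism.
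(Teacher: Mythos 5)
The paper does not prove Proposition~\ref{prop-sg} at all: it is stated as a citation to \cite[Proposition 4.3]{WWY2}, so there is no internal argument to compare your proof against. On its own terms, your reconstruction is essentially sound and is the standard way one would derive these semigroup properties from the action-function properties in Proposition~\ref{fundamental-prop}.

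A few comments on the details. Your derivations of monotonicity and of the semigroup property are correct; in particular the step of pulling the inner infimum over $q_0$ through $h_{q_1,\cdot}(q,\tau)$ is valid exactly for the reasons you give, namely that $u\mapsto h_{q_1,u}(q,\tau)$ is (weakly) increasing by Proposition~\ref{fundamental-prop}(1) and the inner infimum is attained on the compact $M$; continuity of $h$ in $u$ is actually not needed once attainment is available. Your reduction of local Lipschitz continuity to the uniform-Lipschitz-family lemma, and of sup-norm continuity to the Lipschitz dependence of $h_{q_0,u_0}$ on $u_0$ (for fixed $t>0$, with the $t=0$ case trivial since $T^\pm_0=\mathrm{id}$), are both fine. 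The one place that is only a sketch is the initial-time limit $T^\pm_t v\to v$: the upper bound $T^-_tv(q)\leqslant h_{q,v(q)}(q,t)\to v(q)$ uses the boundary behaviour of the implicitly defined action, while the matching lower bound needs an a priori argument that the minimizing $q_0$ concentrates near $q$; this requires superlinearity of $L$ together with a uniform bound on the $u$-argument along minimizers (coming from the a priori Lipschitz estimate on $h$), which you gesture at but would need to make explicit. You have correctly identified this and the inf-through-$h$ step as the genuinely delicate points; the rest is bookkeeping.
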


It turns out that the notion of subsolution (resp. strict subsolution) is equivalent to the $t$-monotonicity (-strict monotonicity) of the solution semigroups. The following proposition can be easily seen from the form of equation \eqref{HJe}.
\begin{proposition}\label{mono1}
Let $v\in C(M,\R)$ be a subsolution (resp. strict subsolution) to \eqref{HJs}, then
\begin{enumerate}[(1)]
  \item for any $q\in M$ and $t\geqslant0$ (resp. $t>0$), $v(q)\leqslant$\,\,(resp. $<$)\,\,$T^{-}_t v(q)$,
  \item for any $q\in M$ and $t\geqslant0$ (resp. $t>0$), $v(q)\geqslant$\,\,(resp. $>$)\,\,$T^{+}_t v(q)$.
\end{enumerate}
\end{proposition}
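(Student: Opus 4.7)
The plan is to prove (1) directly from the implicit variational principle and then obtain (2) by a parallel argument using the forward action; the strict cases follow from the same computation with a quantitative margin. I focus below on (1), as (2) is entirely analogous with the roles of $u_0$ and $u(t)$ exchanged.

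First I would reduce to a pointwise comparison along a single characteristic. Fix $q\in M$, $t>0$ and any $q_0\in M$; by Proposition \ref{Implicit variational} there is a $C^1$ minimizer $\gamma:[0,t]\to M$ with $\gamma(0)=q_0$, $\gamma(t)=q$, and its lift $(\gamma(\tau),p(\tau),u(\tau))$ solves \eqref{ch} with $u(0)=v(q_0)$ and $u(t)=h_{q_0,v(q_0)}(q,t)$. Using the third equation of \eqref{ch} and the Legendre duality between $H$ and $L$, one has $\dot u(\tau)=L(\gamma(\tau),\dot\gamma(\tau),u(\tau))$. If I can show $v(\gamma(t))\leqslant u(t)$, then taking the infimum over $q_0$ gives $v(q)\leqslant T^-_tv(q)$.

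Second, I would compare $u$ with $v\circ\gamma$ via a Gronwall argument on $\phi(\tau):=u(\tau)-v(\gamma(\tau))$. Assuming for the moment that $v$ is $C^1$ (the general case is discussed below), the subsolution property $H(q,d_qv(q),v(q))\leqslant 0$ together with the Fenchel inequality yields
\begin{equation*}
\frac{d}{d\tau}v(\gamma(\tau))=\langle d_qv(\gamma(\tau)),\dot\gamma(\tau)\rangle\leqslant L(\gamma,\dot\gamma,v(\gamma))+H(\gamma,d_qv(\gamma),v(\gamma))\leqslant L(\gamma,\dot\gamma,v(\gamma)).
\end{equation*}
Since $L$ is locally Lipschitz in its $u$-argument with some constant $K$ along the compact trajectory, subtracting the identity $\dot u=L(\gamma,\dot\gamma,u)$ produces $\dot\phi\geqslant -K|\phi|$ with $\phi(0)=0$. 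A Gronwall-type argument (splitting at the first time $\phi$ would become negative and bounding $\phi e^{-K\tau}$) forces $\phi\geqslant 0$, and in the strict case, the uniform margin $H(\cdot,d_qv,v)\leqslant -\eta<0$ upgrades the inequality to $\dot\phi\geqslant -K|\phi|+\eta$, yielding $\phi(\tau)\geqslant \frac{\eta}{K}(1-e^{-K\tau})>0$ for $\tau>0$.

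Third, I would address the non-smoothness of $v$, which is the main obstacle. For a continuous viscosity subsolution the pointwise differentiation of $\tau\mapsto v(\gamma(\tau))$ used above is not legitimate. I see two routes: \emph{(a)} regularize $v$ by a sup-convolution $v_\eps$, which is semiconvex and still a (strict, up to $o(1)$) subsolution of \eqref{HJs}; apply the argument above to $v_\eps$ and pass to the limit using the continuity of $T^-_t$ in the initial datum (Proposition \ref{prop-sg}(4)); \emph{(b)} interpret $(q,t)\mapsto v(q)$ as a $t$-independent viscosity subsolution of \eqref{HJe} and $(q,t)\mapsto T^-_tv(q)$ as the viscosity solution of \eqref{HJe} produced by the semigroup representation, then invoke the comparison principle from the WWY framework to conclude $v(q)\leqslant T^-_tv(q)$. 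For the strict case one either uses that $v+c$ remains a subsolution for small $c>0$, pushing the initial condition strictly below the solution, or carries the margin $\eta$ through the approximation. Statement (2) is obtained by the same scheme applied to the forward action $h^{q_0,u_0}$ with the time-reversed Fenchel inequality, so the sign of $\phi$ flips and one gets $v(q)\geqslant T^+_tv(q)$.
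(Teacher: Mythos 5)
The paper states Proposition \ref{mono1} without proof, remarking only that it ``can be easily seen from the form of equation \eqref{HJe}''; the intended argument is almost certainly your route (b): $v$, viewed as a $t$-independent function, is a viscosity subsolution of \eqref{HJe}, $T^-_t v$ is its solution, and the comparison principle gives $v\leqslant T^-_t v$. Your primary Gronwall argument along a minimizing characteristic is therefore a genuinely different, fully variational proof where the paper sketches none, and it is sound for $C^1$ subsolutions: the identity $\dot u=L(\gamma,\dot\gamma,u)$ on a minimizer, Fenchel's inequality combined with $H(\gamma,d_q v,v)\leqslant 0$, and the local Lipschitz bound on $L$ in $u$ do give $\dot\phi\geqslant -K|\phi|$, $\phi(0)=0$, and the strict margin $\eta$ upgrades this to $\phi(\tau)\geqslant \frac{\eta}{K}(1-e^{-K\tau})>0$. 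What this buys you over the PDE route is a quantitative lower bound and a proof that does not appeal to a comparison theorem, only to Proposition \ref{Implicit variational} and Corollary \ref{Minimality}.

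Two points deserve tightening. First, the regularization step: sup-convolution yields a semiconvex $v_\eps$ for which $v_\eps\circ\gamma$ is only differentiable a.e., and one must check both that the chain rule $\frac{d}{d\tau}(v_\eps\circ\gamma)=\langle d_q v_\eps,\dot\gamma\rangle$ holds at those points and that $K$ and $\eta$ persist uniformly in $\eps$. Since condition \textbf{(H2)} forces subsolutions to be Lipschitz, the regularization of \cite{CR} (already in the bibliography) gives $C^\infty$ approximants with gradient control and preserves strictness up to $o(1)$, which closes this more cleanly than sup-convolution. Second, in route (b) the phrase ``$v+c$ remains a subsolution\ldots pushing the initial condition strictly below the solution'' does not directly yield $v<T^-_t v$: comparison with the datum $v+c$ controls $T^-_t(v+c)$, not $T^-_t v$. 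The correct short-time perturbation is $W(q,t)=v(q)+\eps t$, which matches the initial data and is a strict subsolution of \eqref{HJe} on $[0,\delta]$ by the margin $\eta$; one then bootstraps with the semigroup property and the non-strict monotonicity to cover all $t>0$.
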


\subsection{Solution semigroups and their characteristics}
For a general initial data $v\in C(M,\R)$, we define $U:M\times[0,\infty)\rightarrow\R$ by
\begin{equation}\label{sol-sg}
U(q,t):=T^{-}_t v(q)=\inf_{q'\in M}h_{q',v(q')}(q,t).
\end{equation}
By Proposition \ref{fundamental-prop} (3), fixing $(q,t)\in M\times(0,+\infty)$, the map
\[
q'\mapsto h_{q',v(q')}(q,t)
\]
is continuous. Then there is a $q_0\in M$ such that $U(q,t)=h_{q_0,v(q_0)}(q,t)$. Due to the properties of backward action function, we have
\begin{lemma}\label{sol-action}
For any minimizer $\gamma:[0,t]\rightarrow M$ of $h_{q_0,v(q_0)}(q,t)$,
\[
U(\gamma(\tau),\tau)=h_{q_0,v(q_0)}(\gamma(\tau),\tau).
\]
\end{lemma}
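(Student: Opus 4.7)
The plan is to prove the identity by a two-sided inequality argument, using the Markov property of the backward action function together with the semigroup property of $T^-_t$.

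One direction is immediate from the definitions. By Definition \ref{semi-group}, for each fixed $\tau\in(0,t]$,
\[
U(\gamma(\tau),\tau) \;=\; \inf_{q'\in M} h_{q',v(q')}(\gamma(\tau),\tau) \;\leqslant\; h_{q_0,v(q_0)}(\gamma(\tau),\tau),
\]
obtained by taking $q'=q_0$ in the infimum. So only the reverse inequality needs work.

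For the reverse direction, I argue by contradiction. Suppose there exists $\tau_0\in(0,t)$ with $U(\gamma(\tau_0),\tau_0)<h_{q_0,v(q_0)}(\gamma(\tau_0),\tau_0)$. I apply the Markov property of Proposition \ref{fundamental-prop}(2) to split the interval $[0,t]$ at $\tau_0$: since $\gamma$ is a minimizer of $h_{q_0,v(q_0)}(q,t)$ and, by Proposition \ref{Implicit variational}, is of class $C^1$, the infimum in the Markov decomposition is attained at $q_1=\gamma(\tau_0)$. Thus
\[
h_{q_0,v(q_0)}(q,t) \;=\; h_{\gamma(\tau_0),\,h_{q_0,v(q_0)}(\gamma(\tau_0),\tau_0)}(q,\,t-\tau_0).
\]
On the other hand, the semigroup property (Proposition \ref{prop-sg}(2)) combined with Definition \ref{semi-group} gives
\[
U(q,t) \;=\; T^-_{t-\tau_0}\bigl(U(\cdot,\tau_0)\bigr)(q) \;=\; \inf_{q'\in M} h_{q',\,U(q',\tau_0)}(q,\,t-\tau_0) \;\leqslant\; h_{\gamma(\tau_0),\,U(\gamma(\tau_0),\tau_0)}(q,\,t-\tau_0).
\]
Now the $u_0$-monotonicity of the backward action function (Proposition \ref{fundamental-prop}(1)), applied to the strict inequality $U(\gamma(\tau_0),\tau_0)<h_{q_0,v(q_0)}(\gamma(\tau_0),\tau_0)$, yields
\[
h_{\gamma(\tau_0),\,U(\gamma(\tau_0),\tau_0)}(q,\,t-\tau_0) \;<\; h_{\gamma(\tau_0),\,h_{q_0,v(q_0)}(\gamma(\tau_0),\tau_0)}(q,\,t-\tau_0).
\]
Chaining the three displayed inequalities produces $U(q,t)<h_{q_0,v(q_0)}(q,t)$, contradicting the choice of $q_0$ as a minimizer in \eqref{sol-sg}, for which $U(q,t)=h_{q_0,v(q_0)}(q,t)$.

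The boundary cases $\tau=t$ and $\tau=0$ are routine: $\tau=t$ is the defining choice of $q_0$, while $\tau=0$ follows from $U(q_0,0)=v(q_0)$ and the boundary condition $\lim_{\tau\to 0^+}h_{q_0,v(q_0)}(\cdot,\tau)=v(q_0)$ established in Proposition \ref{Implicit variational}. The only subtle point in the whole argument, and therefore the one to state with some care, is the verification that the Markov infimum is attained at $\gamma(\tau_0)$; this relies on the $C^1$ regularity of minimizers guaranteed by Proposition \ref{Implicit variational}, which is exactly the hypothesis appearing in the ``moreover'' clause of Proposition \ref{fundamental-prop}(2).
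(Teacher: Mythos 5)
Your argument is correct and coincides with the paper's own proof: both reduce to the inequality $U(\gamma(\tau),\tau)\geqslant h_{q_0,v(q_0)}(\gamma(\tau),\tau)$, argue by contradiction at $\tau_0$, and chain the semigroup property, the Markov decomposition through $\gamma(\tau_0)$, and $u_0$-monotonicity to obtain $U(q,t)<h_{q_0,v(q_0)}(q,t)=U(q,t)$. Your explicit handling of the boundary cases $\tau=0,t$ and the remark on why the Markov infimum is attained at $\gamma(\tau_0)$ are welcome clarifications but do not change the substance.
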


\begin{proof}
By \eqref{sol-sg}, we only need to show that for any $\tau\in[0,t]$,
\[
U(\gamma(\tau),\tau)\geq h_{q_0,v(q_0)}(\gamma(\tau),\tau).
\]
We argue by contradiction. Assume there is $\tau_0\in(0,t)$ such that
\[
\underline{u}:=U(\gamma(\tau_0),\tau_0)<\bar{u}:=h_{q_0,v(q_0)}(\gamma(\tau_0),\tau_0),
\]
then to complete the proof, it is necessary to see that
\[
U(q,t)=T^-_{t-\tau_0}U(\cdot,\tau_0)(q)\leqslant h_{\gamma(\tau_0),\underline{u}}(q,t-\tau_0)<h_{\gamma(\tau_0),\bar{u}}(q,t-\tau_0)=h_{q_0,v(q_0)}(q,t)=U(q,t).
\]
Here, the first equality follows from property \eqref{sg} and the second equality is a consequence of Proposition \ref{fundamental-prop} (2) and the fact that $\gamma$ is a minimizer of $h_{q_0,v(q_0)}(q,t)$; the second inequality is deduced from Proposition \ref{fundamental-prop} (1).
\end{proof}

The following well-known theorem gives the name of the operator families defined in Definition \ref{semi-group}.
\begin{proposition}\label{sol}\cite[Proposition 4.4]{WWY2}
$U$ is the unique solution to \eqref{HJe}. In a similar fashion,
\[
U(q,t)=-T^{+}_t v(q)
\]
is the unique solution to
\begin{equation}\label{reverse ham}
\begin{cases}
\partial_t U+\breve{H}(q,\partial_q U,U)=0,\quad\, (q,t)\in M\times(0,+\infty),\\
\hspace{5.4em}U(\cdot,0)=v,\hspace{2.65em}q\in M,
\end{cases}
\end{equation}
where $\breve{H}(q,p,u)=H(q,-p,-u)$. Due to these facts, we call $\{T^{-}_t\}_{t\geqslant 0}$ the \textbf{backward solution semigroup} and $\{T^{+}_t\}_{t\geqslant 0}$ the \textbf{forward solution semigroup} to \eqref{HJe}.
\end{proposition}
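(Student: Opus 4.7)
The plan is to identify $U(q,t):=T^-_tv(q)$ as the unique viscosity solution of \eqref{HJe} by combining the semigroup property \eqref{sg} with the variational formula in Proposition \ref{Implicit variational}, then to deduce uniqueness from a standard comparison principle and to obtain the forward statement via a sign-reversing change of variables. By Proposition \ref{prop-sg}(3), $U$ is locally Lipschitz on $M\times(0,+\infty)$ and $U(\cdot,0)=v$, and the semigroup identity rewrites as the dynamic programming principle
\[
U(q,t+s)=\inf_{q_0\in M}h_{q_0,U(q_0,t)}(q,s),\qquad t,s\geqslant 0,\;q\in M,
\]
which will play the role of Bellman's principle.

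To check the subsolution inequality at $(\bar q,\bar t)\in M\times(0,+\infty)$, I would fix a $C^1$ test $\Phi$ with $U-\Phi$ attaining a local maximum there, pick an arbitrary $\dot q_0\in T_{\bar q}M$ and a smooth curve $\gamma$ on $[\bar t-h,\bar t]$ satisfying $\gamma(\bar t)=\bar q,\,\dot\gamma(\bar t)=\dot q_0$. Testing the infimum in \eqref{eq:Implicit1} against $\gamma$ and combining with dynamic programming yields
\[
U(\bar q,\bar t)\leqslant U(\gamma(\bar t-h),\bar t-h)+\int_{\bar t-h}^{\bar t}L(\gamma(\tau),\dot\gamma(\tau),u(\tau))\,d\tau;
\]
replacing $U$ on the left by the majorant $\Phi$, dividing by $h$ and sending $h\to 0^+$ gives $\partial_t\Phi+\langle\partial_q\Phi,\dot q_0\rangle\leqslant L(\bar q,\dot q_0,U(\bar q,\bar t))$, and a supremum over $\dot q_0$ recovers the subsolution inequality by Fenchel duality. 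For the supersolution direction at a local minimum of $U-\Phi$, I would choose $q_0$ realising $U(\bar q,\bar t)=h_{q_0,v(q_0)}(\bar q,\bar t)$ and the $C^1$ minimiser $\gamma_1$ from Proposition \ref{Implicit variational}. Lemma \ref{sol-action} turns the variational formula into the \emph{equality}
\[
U(\bar q,\bar t)-U(\gamma_1(\bar t-h),\bar t-h)=\int_{\bar t-h}^{\bar t}L(\gamma_1(\tau),\dot\gamma_1(\tau),U(\gamma_1(\tau),\tau))\,d\tau,
\]
and the same limiting procedure together with $L(q,\dot q,u)\geqslant\langle p,\dot q\rangle-H(q,p,u)$ evaluated at $p=\partial_q\Phi(\bar q,\bar t),\,\dot q=\dot\gamma_1(\bar t)$ closes the estimate.

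Uniqueness will follow from a standard doubling-of-variables comparison principle for \eqref{HJe}, available under \textbf{(H2)} and the local Lipschitz dependence of $H$ on $u$ as set up in \cite{WWY2}. The forward statement then follows by noticing that the change of variables $(q,p,u,t)\mapsto(q,-p,-u,-t)$ conjugates $\varphi^t_H$ with the flow of $\breve H(q,p,u)=H(q,-p,-u)$ and swaps the roles of the two action functions of Proposition \ref{Implicit variational}, so that applying the backward statement to $\breve H$ with initial data $-v$ identifies $-T^+_tv$ as the unique viscosity solution of \eqref{reverse ham}. The hardest step will be the supersolution verification: the implicit dependence of the Lagrangian on $u$ precludes a direct Fathi-type argument, and one must rely on Lemma \ref{sol-action} to replace the implicit $u$-component along $\gamma_1$ by the smooth trace $U(\gamma_1(\tau),\tau)$ before the variational integral can be differentiated cleanly.
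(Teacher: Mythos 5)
The paper does not supply a proof of this proposition---it is cited directly as \cite[Proposition 4.4]{WWY2}---so there is no internal argument to compare with. Your reconstruction is the natural one and is essentially correct: verify the sub- and supersolution inequalities from the dynamic programming principle \eqref{sg} and the implicit variational formula, resolve the implicit $u$-dependence along minimizers via Lemma \ref{sol-action}, close with Fenchel duality, appeal to comparison for uniqueness, and obtain the forward statement by the sign-reversing conjugation.

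Two points should be sharpened. In the subsolution step, the $u(\tau)$ inside $L$ cannot be freely prescribed: testing the infimum in \eqref{eq:Implicit1} against a chosen curve $\gamma$ still produces the implicit value $h_{q_1,\,U(q_1,\bar t-h)}\bigl(\gamma(\bar t-h+s),s\bigr)$ with $q_1=\gamma(\bar t-h)$, $s\in[0,h]$, as the third argument of $L$. Sending $h\to 0^+$ and replacing that quantity by $U(\bar q,\bar t)$ requires the joint continuity of $(q_0,u_0,q,t)\mapsto h_{q_0,u_0}(q,t)$ together with $\lim_{t\to 0^+}T^-_t w=w$ from Proposition \ref{prop-sg}(3); this works, but the gloss is of the same nature as the one you explicitly flag in the supersolution step, and it deserves to be spelled out rather than written as an unspecified ``$u(\tau)$''. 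Second, your conjugation $(q,p,u,t)\mapsto(q,-p,-u,-t)$ correctly gives $\breve{h}_{q_0,-u_0}=-h^{q_0,u_0}$ and hence $-T^+_t v=\breve{T}^-_t(-v)$; this solves \eqref{reverse ham} with initial data $-v$, consistently with $-T^+_0 v=-v$. The paper's stated initial condition in \eqref{reverse ham} reads $U(\cdot,0)=v$, which does not match; either it should be $-v$, or the displayed formula should read $-T^+_t(-v)$. The discrepancy appears to be a sign slip in the cited statement rather than in your reasoning, but as written your last sentence asserts the conclusion with the initial data $v$, which your own computation does not produce.
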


We need the fact that the $q$-projection of the characteristic ensured by Theorem \ref{local-chm} is a minimizer in sense of Proposition \ref{Implicit variational}. Notice that by Theorem \ref{local-chm}, the map $\pi_q\circ\varphi^{\delta}_H:\Lambda_v\rightarrow M$ is a diffeomorphism, we use $(\pi_q\circ\varphi^{\delta}_H)^{-1}:M\rightarrow\Lambda_v$ to denote its inverse.

\begin{lemma}\label{local-mini}
For any $q_1\in M$ and $\sigma_0=(q_0,d_q v(q_0),v(q_0))=(\pi_q\circ\varphi^{\delta}_H)^{-1}(q_1)$, set
\[
\varphi^\tau_H\sigma_0=(q(\tau),p(\tau),u(\tau))\quad\text{for}\,\,\tau\in[0,\delta],
\]
then for all $\tau\in[0,\delta], u(\tau)=h_{q_0,v(q_0)}(q(\tau),\tau)$ and
\[
h_{q_0,v(q_0)}(q_1,\delta)=v(q_0)+\int_0^\delta L(q(\tau), \dot{q}(\tau),h_{q_0,v(q_0)}(q(\tau) ,\tau) )\ d\tau.
\]
\end{lemma}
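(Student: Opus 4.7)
My plan is to deduce the identity $u(\tau)=h_{q_0,v(q_0)}(q(\tau),\tau)$ on $[0,\delta]$ by sandwiching it between two inequalities coming from opposite directions: the lower bound from the minimality characterization of the backward action function (Corollary~\ref{Minimality}), and the upper bound from the representation of the smooth short-time solution via the backward semigroup (Proposition~\ref{sol}). Once this pointwise identity is established, the integral formula follows immediately from the third component of \eqref{ch} and Legendre duality.

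First I would invoke Theorem~\ref{local-chm} to obtain a classical solution $U\in C^2(M\times[0,\delta],\R)$ to \eqref{HJe} with $U(\cdot,0)=v$, together with the identities $u(\tau)=U(q(\tau),\tau)$ and $p(\tau)=\partial_q U(q(\tau),\tau)$ for the characteristic $\varphi^\tau_H\sigma_0=(q(\tau),p(\tau),u(\tau))$ on $[0,\delta]$. Since a $C^2$ classical solution is in particular a viscosity solution, Proposition~\ref{sol} forces $U(\cdot,\tau)=T^-_\tau v$ on this time interval. Taking $q'=q_0$ as a competitor in the infimum of Definition~\ref{semi-group} then yields
\[
u(\tau)=U(q(\tau),\tau)=T^-_\tau v(q(\tau))\leqslant h_{q_0,v(q_0)}(q(\tau),\tau).
\]
For the reverse inequality, the restriction of $\varphi^{\cdot}_H\sigma_0$ to $[0,\tau]$ lies in $S^{q(\tau),\tau}_{q_0,v(q_0)}$ in the notation of Corollary~\ref{Minimality}, whence $h_{q_0,v(q_0)}(q(\tau),\tau)\leqslant u(\tau)$. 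The two estimates combine into the desired equality on $[0,\delta]$.

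For the integral formula I would exploit Legendre duality. Along a trajectory of \eqref{ch} one has $\dot q=\partial H/\partial p$, which is precisely the active constraint making $\langle p,\dot q\rangle-H(q,p,u)=L(q,\dot q,u)$; substituting into the third equation of \eqref{ch} gives $\dot u(\tau)=L(q(\tau),\dot q(\tau),u(\tau))$. Integrating from $0$ to $\delta$ with $u(0)=v(q_0)$ and $u(\delta)=h_{q_0,v(q_0)}(q_1,\delta)$, and inserting the pointwise identity $u(\tau)=h_{q_0,v(q_0)}(q(\tau),\tau)$ inside the integrand, produces exactly the claimed expression for $h_{q_0,v(q_0)}(q_1,\delta)$.

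The one delicate point is the identification $U(\cdot,\tau)=T^-_\tau v$, which requires the uniqueness part of Proposition~\ref{sol} for viscosity solutions to the Cauchy problem \eqref{HJe}; all remaining steps are routine applications of the variational principle, Corollary~\ref{Minimality}, and the Legendre transform.
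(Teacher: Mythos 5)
Your proof is correct and follows essentially the same route as the paper: the pointwise identity $u(\tau)=h_{q_0,v(q_0)}(q(\tau),\tau)$ is sandwiched between Proposition~\ref{sol} (via $T^-_\tau v$) and Corollary~\ref{Minimality}, exactly as in the text. Your derivation of the integral formula from $\dot u = L(q,\dot q,u)$ along the characteristic is a slightly more streamlined version of the paper's computation, which differentiates $U(q(\tau),\tau)$ and cancels $\partial_t U + H$ via the Hamilton--Jacobi equation — the two routes are equivalent by Legendre--Fenchel duality.
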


\begin{proof}
By Theorem \ref{local-chm}, $U\in C^2(M\times[0,\delta],\R)$ and $(q(\tau),p(\tau),u(\tau))$ satisfy
\begin{equation}\label{eq:1}
p(\tau)=\partial_q U(q(\tau),\tau),\quad u(\tau)=U(q(\tau),\tau),\quad\tau\in[0,\delta].
\end{equation}
and the boundary conditions read as
\begin{equation}\label{eq:2}
q(0)=q_0,\quad q(\delta)=q_1,\quad u(0)=v(q_0).
\end{equation}
It follows from Proposition \ref{sol} that
\[
h_{q_0,v(q_0)}(q(\tau),\tau)\geq T^-_\tau v(q(\tau))=U(q(\tau),\tau)=u(\tau).
\]
Combining \eqref{eq:2} and Corollary \ref{Minimality} gives
\[
h_{q_0,v(q_0)}(q(\tau),\tau)=\inf\,\{u(\tau):(q(t),p(t),u(t))\in S^{q(\tau),\tau}_{q_0,v(q_0)}\}\leq u(\tau)
\]
and therefore for $\tau\in[0,\delta]$,
\[
u(\tau)=h_{q_0,v(q_0)}(q(\tau) ,\tau).
\]
Now we can compute as
\begin{align*}
&h_{q_0,v(q_0)}(q_1,\delta)\geq U(q_1,\delta)=U(q(\delta),\delta)\\
=\,&U(q(0),0)+\int^\delta_0 \partial_t U(q(\tau),\tau)+\langle\partial_q U(q(\tau),\tau),\dot{q}(\tau)\rangle\,d\tau\\
=\,&v(q_0)+\int^\delta_0 \partial_t U(q(\tau),\tau)+\langle p(\tau),\partial_p H(q(\tau),p(\tau),u(\tau))\rangle\,d\tau\\
=\,&v(q_0)+\int^\delta_0 \partial_t U(q(\tau),\tau)+H(q(\tau),p(\tau),u(\tau))+L(q(\tau),\dot{q}(\tau),u(\tau))\,d\tau\\
=\,&v(q_0)+\int^\delta_0 L(q(\tau),\dot{q}(\tau),u(\tau))\,d\tau\\
=\,&v(q_0)+\int^\delta_0 L(q(\tau),\dot{q}(\tau),h_{q_0,v(q_0)}(q(\tau) ,\tau))\,d\tau.
\end{align*}
Here, the third equality uses \eqref{eq:1} and the equations \eqref{ch} for characteristics, the fourth equality follows from the knowledge of Legendre-Fenchel inequality in convex analysis, i.e.,
\[
\dot{q}=\partial_p H(q,p,u)\Leftrightarrow\langle p,\dot{q}\rangle=H(q,p,u)+L(q,\dot{q},u),
\]
the fifth equality is due to \eqref{eq:1} and the fact that $U(q,t)$ is a solution to \eqref{HJe}, precisely
\begin{align*}
&\,\partial_t U(q(\tau),\tau)+H(q(\tau),p(\tau),u(\tau))\\
=&\,\partial_t U(q(\tau),\tau)+H(q(\tau),\partial_q U(q(\tau),\tau),U(q(\tau),\tau))=0.
\end{align*}
Combining the above inequality and \eqref{eq:Implicit1}, we complete the proof.
\end{proof}

The above lemma justifies the fact that the characteristics initiating from $\Lambda_v$ from which the local smooth solution is constructed by Theorem \ref{local-chm} are actually action minimizers of $T^-_t v$. This is true not only for local solutions, in fact we have
\begin{theorem}\label{global-chm}
Assume $v\in C^2(M,\R)$. For any $(q,t)\in M\times(0,+\infty)$, there is $\sigma_0\in\Lambda_v$ such that the characteristic segment $\varphi^\tau_H \sigma_0=(q(\tau),p(\tau),u(\tau)), \tau\in[0,t]$ satisfies
\[
U(q(\tau),\tau)=u(\tau).
\]
\end{theorem}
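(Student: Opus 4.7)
The plan is to produce $\sigma_0$ as the initial point of the minimizing characteristic associated with $U(q,t)=T^-_tv(q)$, and then use local smoothness of the solution guaranteed by Theorem \ref{local-chm} to check that this initial point lies on $\Lambda_v$.

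First I would locate the basepoint. Since $M$ is compact and $q'\mapsto h_{q',v(q')}(q,t)$ is continuous (Proposition \ref{fundamental-prop}(3)), the infimum in \eqref{eq:Tt-+ rep} defining $U(q,t)=T^-_tv(q)$ is attained at some $q_0\in M$, so $U(q,t)=h_{q_0,v(q_0)}(q,t)$. Proposition \ref{Implicit variational} then furnishes a $C^1$ minimizer $\gamma:[0,t]\to M$ with $\gamma(0)=q_0$, $\gamma(t)=q$, together with a trajectory $(q_1(\tau),p_1(\tau),u_1(\tau))=\varphi^\tau_H(q_0,p_1(0),v(q_0))$ of \eqref{ch} such that $q_1=\gamma$ and $u_1(\tau)=h_{q_0,v(q_0)}(\gamma(\tau),\tau)$. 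Applying Lemma \ref{sol-action} along this minimizer gives the sought identity $U(\gamma(\tau),\tau)=u_1(\tau)$ for every $\tau\in[0,t]$.

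The remaining and main obstacle is to show that the initial point is actually on $\Lambda_v$, i.e. $p_1(0)=d_qv(q_0)$; Proposition \ref{Implicit variational} only pins down $p_1(\tau)=\partial L/\partial\dot q(\gamma(\tau),\dot\gamma(\tau),u_1(\tau))$, which a priori has no connection with $dv$. Here I would invoke Theorem \ref{local-chm}: on $M\times[0,\delta]$ there is a $C^2$ solution of \eqref{HJe}, which by uniqueness of viscosity solutions (Proposition \ref{sol}) agrees with $U$ there; in particular $\partial_qU(\cdot,0)=d_qv$. Set $p^*(\tau):=\partial_qU(\gamma(\tau),\tau)$ for $\tau\in(0,\delta)$. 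Differentiating the identity $u_1(\tau)=U(\gamma(\tau),\tau)$ in $\tau$ and substituting $\partial_tU=-H(\gamma,p^*,u_1)$ from \eqref{HJe} together with the formulas for $\dot\gamma$ and $\dot u_1$ from \eqref{ch}, one obtains
\[
\langle p_1(\tau)-p^*(\tau),\dot\gamma(\tau)\rangle=H(\gamma(\tau),p_1(\tau),u_1(\tau))-H(\gamma(\tau),p^*(\tau),u_1(\tau)).
\]
Combined with $\dot\gamma(\tau)=\partial_pH(\gamma(\tau),p_1(\tau),u_1(\tau))$ and the strict convexity of $H$ in $p$ from \textbf{(H2)}, this equality saturates the Legendre inequality and forces $p_1(\tau)=p^*(\tau)$ on $(0,\delta)$. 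Letting $\tau\to 0^+$ yields $p_1(0)=\partial_qU(q_0,0)=d_qv(q_0)$, so $\sigma_0:=(q_0,d_qv(q_0),v(q_0))\in\Lambda_v$ is the desired initial condition and the characteristic $\varphi^\tau_H\sigma_0$ satisfies $U(q(\tau),\tau)=u(\tau)$ on $[0,t]$.
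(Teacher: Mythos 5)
Your proof is correct, but it follows a genuinely different route from the paper's. The paper splits $[0,t]$ into $[0,\delta]$ and $[\delta,t]$, uses Lemma~\ref{local-mini} to produce a characteristic starting on $\Lambda_v$ that minimizes on $[0,\delta]$ and lands at some $q_1$, then picks a minimizer of $h_{q_1,T^-_\delta v(q_1)}(q,t-\delta)$, and proves (via an auxiliary claim resting on the Markov property) that the concatenation of these two arcs minimizes $h_{q_0,v(q_0)}(q,t)$ on the full interval, so that Proposition~\ref{Implicit variational} certifies it as a $C^1$ characteristic. You instead invoke Proposition~\ref{Implicit variational} once over the whole interval $[0,t]$ to obtain \emph{the} minimizing characteristic $(q_1,p_1,u_1)$ through $q_0$, apply Lemma~\ref{sol-action} to get $u_1=U\circ(\gamma,\mathrm{id})$, and then pin down the single remaining unknown --- the initial momentum --- by differentiating this identity on $(0,\delta)$ where $U$ is $C^2$: combining the characteristic ODE for $\dot u_1$ with $\partial_t U+H(q,\partial_qU,U)=0$ yields
\[
\langle p_1-\partial_qU,\dot\gamma\rangle=H(\gamma,p_1,u_1)-H(\gamma,\partial_qU,u_1),
\]
and since $\dot\gamma=\partial_pH(\gamma,p_1,u_1)$, strict convexity of $H$ in $p$ (assumption \textbf{(H2)}) forces $p_1=\partial_qU$ on $(0,\delta)$, hence $p_1(0)=d_qv(q_0)$ by continuity. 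Your argument avoids the explicit concatenation and the gluing claim entirely; what you pay for this is the Legendre-saturation computation to identify the initial momentum, a step the paper gets for free because its short-time arc is built directly inside $\Lambda_v$ via Lemma~\ref{local-mini}. Both rely equally on the local $C^2$ regularity from Theorem~\ref{local-chm}, just for different purposes: the paper uses it to launch the orbit from $\Lambda_v$, you use it to identify where the orbit must have launched from.
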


\begin{proof}
For $t\leq\delta$, Theorem \ref{global-chm} reduces to Theorem \ref{local-chm} and there is nothing to prove. For $t>\delta$, we use Definition \ref{semi-group} and \eqref{sg} to write
\[
U(q,t)=T^-_t v(q)=T^-_{t-\delta}\circ T^-_{\delta}v(q)=\inf_{q'\in M}h_{q',T^-_{\delta}v(q')}(q,t-\delta).
\]
Proposition \ref{fundamental-prop} (3) and \ref{prop-sg} (3) imply $h_{q',T^-_{\delta}v(q')}(q,t)$ is Lipschitz continuous in $q'$. Since $M$ is compact, the above infimum is attained at $q'=q_1\in M$. Set $u_1=T^-_{\delta}v(q_1)$, then according to Proposition \ref{Implicit variational}, there is a minimizer $\gamma_1:[0,t-\delta]\rightarrow M$ with $\gamma_1(0)=q_1$ and
\begin{equation}\label{min-1}
U(q,t)=h_{q_1,u_1}(q,t-\delta)=u_1+\int_0^{t-\delta} L(\gamma_1(\tau), \dot{\gamma_1}(\tau),h_{q_1,u_1}(\gamma_1(\tau) ,\tau))\ d\tau.
\end{equation}
For $\tau\in[\delta,t]$, we set $q_1(\tau)=\gamma_1(\tau-\delta)$ and
\[
u_1(\tau):=h_{q_1,u_1}(\gamma_1(\tau-\delta),\tau-\delta),\quad p_1(\tau):=\frac{\partial L}{\partial \dot q}(\gamma_1(\tau-\delta),\dot{\gamma}_1(\tau-\delta),u_1(\tau)),
\]
then Proposition \ref{Implicit variational} also implies that $(q_1(\tau),p_1(\tau),u_1(\tau))$ satisfies \eqref{ch} and
\begin{align*}
q_1(\delta)=q_1,\quad q_1(t)=q,\quad\lim_{\tau\rightarrow t^-}u_1(\tau)=u_1.
\end{align*}

\vspace{1em}
By Lemma \ref{local-mini}, there is a unique $\sigma_0=(q_0, d_q v(q_0), v(q_0))\in\Lambda_v$ such that
\[
\pi_q\circ\varphi^{\delta}_H(\sigma_0)=q_1,
\]
and if $\varphi^\tau_H\sigma_0=(q_0(\tau),p_0(\tau),u_0(\tau)),\tau\in[0,\delta]$, then
\begin{equation}\label{min-2}
h_{q_0,v(q_0)}(q_1,\delta)=v(q_0)+\int_0^\delta L(q_0(\tau), \dot{q}_0(\tau),h_{q_0,v(q_0)}(q_0(\tau) ,\tau) )\ d\tau
\end{equation}
and
\[
h_{q_0,v(q_0)}(q_1,\delta)=u_0(\delta)=U(q_1,\delta)=T^-_{\delta}v(q_1)=u_1.
\]

\vspace{1em}
\textbf{Claim:}\,\,for $\tau\in[\delta,t]$,
\begin{equation}\label{eq:3}
h_{q_0,v(q_0)}(q_1(\tau),\tau)=h_{q_1,u_1}(q_1(\tau),\tau-\delta).
\end{equation}

\vspace{1em}
\textit{Proof of the claim}:\,\,It follows from Proposition \ref{fundamental-prop} (2) that
\[
h_{q_0,v(q_0)}(q_1(\tau),\tau)=\inf_{q'\in M}h_{q',h_{q_0,v(q_0)}(q',\delta)}(q_1(\tau),\tau-\delta)\leq h_{q_1,u_1}(q_1(\tau),\tau-\delta).
\]
Assume for some $\tau_0\in(\delta,t)$,
\[
u_2:=h_{q_0,v(q_0)}(q_1(\tau_0),\tau_0)<h_{q_1,u_1}(q_1(\tau_0),\tau_0-\delta):=\bar{u}_2,
\]
then by Proposition \ref{fundamental-prop} (1) and the fact that $\gamma_1$ is a minimizer of $h_{q_1,u_1}(q,t-\delta)$,
\[
h_{q_0,v(q_0)}(q,t)\leq h_{q_1(\tau_0),u_2}(q,t-\tau_0)<h_{q_1(\tau_0),\bar{u}_2}(q,t-\tau_0)=h_{q_1,u_1}(q,t-\delta)=U(q,t).
\]
This contradicts to Proposition \ref{sol} since
\[
U(q,t)=T^-_t v(q)=\inf_{q'\in M}h_{q',v(q')}(q,t)\leq h_{q_0,v(q_0)}(q,t).
\]
\qed

\vspace{1em}
Define $q:[0,t]\rightarrow M$ by
\[
q(\tau):=
\begin{cases}
q_0(\tau),\quad \tau\in[0,\delta];\\
q_1(\tau),\quad \tau\in[\delta,t],
\end{cases}
\]
then combining \eqref{min-1}-\eqref{eq:3}, we obtain
\begin{align*}
&\,h_{q_0,v(q_0)}(q,t)=h_{q_1,u_1}(q,t-\delta)=u_1+\int_0^{t-\delta} L(\gamma_1(\tau), \dot{\gamma_1}(\tau),h_{q_1,u_1}(\gamma_1(\tau) ,\tau))\ d\tau\\
=&\,\,u_1+\int_{\delta}^{t} L(q_1(\tau), \dot{q_1}(\tau),h_{q_1,u_1}(q_1(\tau),\tau-\delta))\ d\tau\\
=&\,\,h_{q_0,v(q_0)}(q_1,\delta)+\int_{\delta}^{t} L(q_1(\tau), \dot{q_1}(\tau),h_{q_0,v(q_0)}(q_1(\tau),\tau))\ d\tau\\
=&\,\,v(q_0)+\int_0^\delta L(q_0(\tau), \dot{q}_0(\tau),h_{q_0,v(q_0)}(q_0(\tau) ,\tau) )\ d\tau+\int_{\delta}^{t} L(q_1(\tau), \dot{q_1}(\tau),h_{q_0,v(q_0)}(q_1(\tau),\tau))\ d\tau\\
=&\,\,v(q_0)+\int_0^t L(q(\tau), \dot{q}(\tau),h_{q_0,v(q_0)}(q(\tau),\tau))\ d\tau,\\
\end{align*}
This shows that $q:[0,t]\rightarrow M$ is a minimizer of $h_{q_0,v(q_0)}(q,t)$, thus by Proposition \ref{Implicit variational},
\[
u(\tau)=h_{q_0,v(q_0)}(\gamma(\tau),\tau),\quad p(\tau)=\frac{\partial L}{\partial \dot q}(q(\tau),\dot{q}(\tau),u(\tau))
\]
is a $C^1$ characteristic starting from $\sigma_0$. Invoking Lemma \ref{sol-action}, we find
\[
u(\tau)=
\begin{cases}
u_0(\tau)=U(q_0(\tau),\tau)=U(q(\tau),\tau),\quad\tau\in[0,\delta];\\
h_{q_1,u_1}(q_1(\tau),\tau-\delta)=U(q_1(\tau),\tau)=U(q(\tau),\tau),\quad\tau\in[\delta,t].
\end{cases}
\]
\end{proof}

\begin{remark}
The theorem shows that even for large $t$, the solution of \eqref{HJe} can also be traced by characteristics starting from the 1-graph of the initial data. The main difference from the case when $t$ is small, treated in Theorem \ref{local-chm}, is that the map $\pi_q\circ\varphi^t_H:\Lambda_v\rightarrow M$ is only a \textbf{surjection} rather than a diffeomorphism.
\end{remark}

\section{Connecting Legendrian graph and equilibria}
Let $u_-\in C^\infty(M,\R)$ be a classical solution to \eqref{HJs}, then $(H,\Lambda_{-})$ is a non-equilibrium thermodynamic system in the sense of Definition \ref{non-equilibrium}. This section is devoted to establishing abstract mechanisms for the existence of connecting orbits of an arbitrary Legendrian graph to the set of equilibria $\Lambda_{-}$. These mechanisms are based on the large time behavior of solution semigroups.

\subsection{Large time behavior of solution semigroups}
According to their definitions, $\{T^{\pm}_t\}_{t\geqslant 0}$ act on $C(M,\R)$, the space of continuous functions on $M$. We shall focus on the fixed points of such actions and introduce
\begin{definition}\label{fp}
A continuous function $u_{-}\,\,($ resp. $u_+)$ is called a fixed point of $\{T^{-}_t\}_{t\geqslant 0}\,\,($ resp. $\{T^{+}_t\}_{t\geqslant 0})$ if
\[
T^{-}_t u_-=u_-,\quad(\text{resp.}\,\,T^{+}_t u_+=u_+.)\quad\text{for any}\,\,t\geq0.
\]
We use $\mathcal{S}_-\,\,($resp. $\mathcal{S}_+)$ to denote the set of fixed points of $\{T^{-}_t\}_{t\geqslant 0}\,\,($resp. $\{T^{+}_t\}_{t\geqslant 0})$.
\end{definition}
\begin{remark}
  $\mathcal{S}_-$ is the analogy of weak KAM solutions \cite{Fathi_book} with $u$-independent  Hamiltonian.
\end{remark}

As an easy consequence of Proposition \ref{sol}, we have
\begin{proposition}\label{cov1}
$u_-\in\mathcal{S}_-$ if and only if $u_-$ is a solution to \eqref{HJs}. Similarly, $u_+\in\mathcal{S}_+$ if and only if $-u_+$ is a solution to the equation
\begin{equation}\label{reverse eq}
\breve{H}(q,d_q u(q),u(q))=0,\quad\text{for any }q\in M;
\end{equation}
\end{proposition}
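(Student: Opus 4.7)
The plan is to derive this directly from Proposition \ref{sol}, which identifies $T^{-}_t v$ (resp.\ $-T^{+}_t v$) with the unique viscosity solution of \eqref{HJe} (resp.\ \eqref{reverse ham}). The bridge between the stationary equation \eqref{HJs} and the evolutionary one \eqref{HJe} is the observation that a time-independent function solves \eqref{HJe} precisely when it solves \eqref{HJs}, combined with the uniqueness statement in Proposition \ref{sol}.

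For the forward direction of the first equivalence, assume $u_-$ is a viscosity solution of \eqref{HJs}. Define $U(q,t):=u_-(q)$ on $M\times[0,\infty)$. Since $\partial_t U\equiv 0$ and $H(q,\partial_q U,U)=H(q,d_q u_-,u_-)=0$ in the viscosity sense (this is immediate from the definition of viscosity solution once one tests $U-\Phi$ at a local extremum $(q_0,t_0)$ with $\phi(q):=\Phi(q,t_0)$, so that the $t$-dependence of the test function contributes only $\partial_t\Phi(q_0,t_0)$, which in turn matches a corresponding extremum of $u_--\phi$ at $q_0$), $U$ is the unique viscosity solution of \eqref{HJe} with initial datum $u_-$. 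By Proposition \ref{sol} this forces $T^{-}_t u_-(q)=U(q,t)=u_-(q)$ for every $t\geqslant 0$, i.e.\ $u_-\in\mathcal{S}_-$.

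For the converse, suppose $u_-\in\mathcal{S}_-$. Then $U(q,t):=T^{-}_t u_-(q)\equiv u_-(q)$ is the unique viscosity solution of \eqref{HJe} with initial datum $u_-$. To recover that $u_-$ solves \eqref{HJs}, take any $\phi\in C^1(M,\R)$ such that $u_--\phi$ attains a local maximum at $q_0\in M$, and extend $\phi$ trivially by $\Phi(q,t):=\phi(q)$. Then $U-\Phi=u_--\phi$ attains a local maximum at $(q_0,t_0)$ for every $t_0>0$, and the viscosity subsolution inequality for \eqref{HJe} reads
\[
0\geqslant\partial_t\Phi(q_0,t_0)+H(q_0,\partial_q\Phi(q_0,t_0),U(q_0,t_0))=H(q_0,d_q\phi(q_0),u_-(q_0)).
\]
The symmetric choice of a local minimum gives the reverse inequality, so $u_-$ is a viscosity solution of \eqref{HJs}.

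The second equivalence is obtained by running the same argument with the Hamiltonian $\breve{H}$ in place of $H$, using the second part of Proposition \ref{sol}: a fixed point $u_+$ of $\{T^{+}_t\}_{t\geqslant 0}$ makes $-u_+$ a time-independent viscosity solution of the evolutionary equation in \eqref{reverse ham}, and the test-function argument above (applied to $-u_+$ and $\breve{H}$) shows this is equivalent to $-u_+$ solving \eqref{reverse eq}. I do not anticipate a real obstacle here; the only point that needs a little care is verifying that testing a $t$-independent function against $C^1$ test functions on $M\times(0,\infty)$ is equivalent to testing against test functions in $C^1(M,\R)$, which is standard and follows by the freezing trick $\Phi(q,t):=\phi(q)$ used above.
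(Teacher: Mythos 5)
Your argument is correct and follows precisely the route the paper intends: the paper presents Proposition \ref{cov1} as "an easy consequence of Proposition \ref{sol}" without further elaboration, and what you have done is spell out that consequence, namely that a time-independent function solves \eqref{HJe} in the viscosity sense if and only if it solves \eqref{HJs}, and then invoke uniqueness. The freezing trick $\Phi(q,t):=\phi(q)$ and its converse are exactly the standard bridge one needs here, and your treatment of the forward semigroup via $\breve H$ mirrors the backward case correctly.
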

If for some initial data $v\in C(M,\R)$, the uniform limit $u_-:=\lim_{t\rightarrow\infty}T^{-}_t v$ exists, then for any $s\geqslant0$, we deduce from Proposition \ref{prop-sg} (3) and (5) of $\{T^-_t\}_{t\geqslant0}$ that
\[
T^-_s u_{-}=T^-_s(\lim_{t\rightarrow\infty}T^{-}_t v)=\lim_{t\rightarrow\infty}T^-_s\circ T^{-}_t v=\lim_{t\rightarrow\infty}T^-_{s+t}v=u_-,
\]
so that $u_-\in\mathcal{S}_-$. Similar conclusion holds with $-$ replaced by $+$ in the above discussion. From PDE aspects, the existence of uniform limits $\lim_{t\rightarrow\infty}T^{\pm}_t v$ is usually studied under the subject of \textbf{large time behaviors}.

\subsection{Construction of connecting orbits I}
In this part, we wish to extract a mechanism for producing \textbf{semi-infinite} connecting orbits between the Legendrian graph $\Lambda_{0}$ and the states of equilibrium $\Lambda_{-}$ when $u_-$ is the uniform limit of the solution semigroup initiating from the smooth data $u_0$. In fact, we could obtain the following
\begin{theorem}\label{graph-graph1}
Assume $u_0,u_-\in C^\infty(M,\R)$ and $(H,\Lambda_{-})$ is a system. If the equality
\begin{equation}\label{convergence1}
\lim_{t\rightarrow+\infty}T^-_t u_0(q)=u_-(q)
\end{equation}
holds uniformly for all $q\in M$, then
\begin{enumerate}[(1)]
  \item $\Lambda_-\subset\overline{\cup_{t\geqslant0}\varphi^t_H(\Lambda_0)}$,
  \item there is $\sigma_0\in\Lambda_{0}$ such that $\omega(\sigma_0)$ is a nonempty subset of $\Lambda_-$.
\end{enumerate}
\end{theorem}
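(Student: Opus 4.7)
The central engine will be Theorem \ref{global-chm}, which realizes every value $T^-_t u_0(q)$ as the $u$-coordinate of an honest contact orbit originating on $\Lambda_0$; I combine this with the hypothesized convergence $T^-_t u_0 \to u_-$, superlinearity \textbf{(H2)}, and semiconcavity of variational solutions to upgrade convergence of $u$-coordinates into convergence of full phase-space points. For (1), fix $q^\ast \in M$ and target $\sigma^\ast = (q^\ast, d_q u_-(q^\ast), u_-(q^\ast)) \in \Lambda_-$; for each $t > 0$ Theorem \ref{global-chm} produces $\sigma_0^t \in \Lambda_0$ with $\varphi^t_H\sigma_0^t = (q^\ast, p(t), T^-_t u_0(q^\ast))$, and the assumption already gives the $u$-component tending to $u_-(q^\ast)$. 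Standard Tonelli-type considerations (available under \textbf{(H2)}) place $p(t) \in \partial^+(T^-_t u_0)(q^\ast)$ and keep it bounded, so along some $t_n \to \infty$ one has $p(t_n) \to p^\ast$. The decisive ingredient is a superdifferential-collapse: equi-semiconcave functions converging uniformly to a $C^1$ function have $\limsup_{t\to\infty}\partial^+(T^-_t u_0)(q^\ast) \subseteq \partial^+ u_-(q^\ast) = \{d_q u_-(q^\ast)\}$, which forces $p^\ast = d_q u_-(q^\ast)$ and hence $\varphi^{t_n}_H\sigma_0^{t_n}\to\sigma^\ast$, giving $\Lambda_- \subseteq \overline{\bigcup_{t \geqslant 0}\varphi^t_H(\Lambda_0)}$.

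For (2), I upgrade these finite-time selections to a single initial point by compactness. Fix any $q^\ast \in M$; for each $n \in \mathbb{N}$ let $\sigma_0^{(n)} \in \Lambda_0$ be the starting point, produced by Theorem \ref{global-chm}, of a characteristic $\varphi^\tau_H\sigma_0^{(n)} = (q^{(n)}(\tau), p^{(n)}(\tau), u^{(n)}(\tau))$ satisfying the calibration identity $u^{(n)}(\tau) = T^-_\tau u_0(q^{(n)}(\tau))$ on $[0,n]$ and landing at $q^{(n)}(n) = q^\ast$. Since $\Lambda_0$ is compact (diffeomorphic to $M$), extract $\sigma_0^{(n_k)} \to \sigma_0 \in \Lambda_0$; continuity of the flow combined with joint continuity of $(q,\tau) \mapsto T^-_\tau u_0(q)$ from Proposition \ref{prop-sg}(3) propagates the calibration to the limit, so the orbit $\varphi^\tau_H\sigma_0 = (q(\tau),p(\tau),u(\tau))$ satisfies $u(\tau) = T^-_\tau u_0(q(\tau))$ for every $\tau \geqslant 0$. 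Uniform boundedness of $T^-_\tau u_0$ (from $T^-_\tau u_0 \to u_-$) combined with \textbf{(H2)} keeps this orbit pre-compact in $\Sigma$, so $\omega(\sigma_0) \neq \emptyset$; for any accumulation point $\bar\sigma = (\bar q, \bar p, \bar u) = \lim_k \varphi^{\tau_k}_H\sigma_0$, the calibration yields $\bar u = \lim_k T^-_{\tau_k} u_0(q(\tau_k)) = u_-(\bar q)$, and re-applying the superdifferential-collapse from (1) gives $\bar p = d_q u_-(\bar q)$, hence $\bar\sigma \in \Lambda_-$.

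The main obstacle will be the superdifferential-collapse step, which has two components: the terminal momentum of a minimizing characteristic must be shown to lie in $\partial^+(T^-_t u_0)$, a property of variational solutions to be extracted from the framework of \cite{WWY1}-\cite{WWY3}; and equi-semiconcave functions converging uniformly to a $C^1$ limit must be shown to have superdifferentials squeezed onto the gradient of the limit. Both facts are standard in the symplectic Tonelli setting and should transfer to the contact case under \textbf{(H2)}; the remaining work uses only the continuity and monotonicity properties of $\{T^\pm_t\}_{t \geqslant 0}$ already recorded in Propositions \ref{fundamental-prop} and \ref{prop-sg}, together with compactness of $M$.
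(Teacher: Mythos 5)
Your proposal is essentially correct and follows the same global architecture as the paper (Theorem \ref{global-chm} to generate calibrated characteristic segments ending near a target point of $\Lambda_-$; for part (2), compactness of $\Lambda_0$ and continuous dependence to extract a single initial point and a semi-infinite calibrated orbit). The one place where you take a genuinely different route is the crucial ``momentum convergence'' step, and this is worth comparing explicitly.

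The paper proves $p_n(n)\to d_q u_-(\bar q)$ (and $\bar p = d_q u_-(\bar q)$ in part (2)) by a purely dynamical corner argument. Assuming the momenta do not converge to $d_q u_-(\bar q)$, it flows the limit point $\bar\sigma$ backward for one unit of time, uses continuous dependence plus the calibration identity to pin down the backward endpoint on the calibrating graph, and then invokes the Markov property (Proposition \ref{fundamental-prop}(2)) together with the fact that $u_-\in\mathcal S_-$: the concatenation of the two candidate minimizers of $h_{q_{-1},u_-(q_{-1})}(\bar q,1)$ and $h_{\bar q, u_-(\bar q)}(q_1,1)$ would have a corner at $\bar q$, hence cannot be the necessarily $C^1$ minimizer of $h_{q_{-1},u_-(q_{-1})}(q_1,2)$, forcing a strict inequality that contradicts $u_-(q_1)=T^-_2u_-(q_1)$. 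Every ingredient of this argument (Markov property, $C^1$ regularity of minimizers, strict $u_0$-monotonicity) is already recorded in Propositions \ref{Implicit variational}--\ref{fundamental-prop}, so the paper's proof is completely self-contained within its toolkit.

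You instead invoke two facts from semiconcavity theory: (i) the terminal momentum of a backward minimizer lies in the superdifferential $\partial^+(T^-_t u_0)(q^\ast)$, and (ii) equi-semiconcave functions converging uniformly to a $C^1$ limit have their superdifferentials squeezed onto the gradient of the limit. Point (ii) is a clean elementary fact (pass to the limit in the one-sided Taylor estimate with the uniform constant, then use that $D^+u_-=\{d_qu_-\}$ for $C^1$ functions). Point (i) is standard in the autonomous Tonelli setting (Cannarsa--Sinestrari) and should transfer to the implicit/contact framework, but neither it nor the \emph{equi}-semiconcavity of $\{T^-_t u_0\}_{t\geq 1}$ (uniform constant, which in the Tonelli case is extracted from the uniform Lipschitz bound and the semigroup property $T^-_t=T^-_1\circ T^-_{t-1}$) is established in \cite{WWY1}--\cite{WWY3} as cited here, and the paper nowhere states them. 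You flag this honestly, and I agree these are the load-bearing gaps; with a reference to the semiconcavity results for contact Hamilton--Jacobi (e.g.\ \cite{CCJWY} or \cite{CS}-style estimates adapted to the implicit setting) the proof closes. The trade-off is clear: your route is conceptually cleaner for anyone who already owns the semiconcavity machinery, while the paper's corner argument is longer but uses only properties it has already proved. Both are valid; they are genuinely different mechanisms for the same key lemma.
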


\begin{proof}
(1)\,\, For any $\sigma=(\bar{q},d_q u_-(\bar{q}),u_-(\bar{q}))\in\Lambda_-$, by Theorem \ref{global-chm}, we choose, for $n\geq1, \sigma_n=(q_n, p_n, u_n)\in\Lambda_{0}$ such that the corresponding characteristic segments
\[
\varphi^{t}_H\sigma_n=(q_n(t), p_n(t), u_n(t)),\quad t\in[0,n]
\]
satisfies the identity
\begin{equation}\label{eq:4}
u_n(t)=T^-_t u_0(q_n(t))=h_{q_n(t-\tau),u_n(t-\tau)}(q_{n}(t),\tau),\quad \lim_{n\rightarrow\infty}q_n(n)=\bar{q},
\end{equation}
for all $\tau\in[0,t]$, where we use \eqref{eq:3}. Combining the above equations with the assumption \eqref{convergence1},
\begin{equation}\label{u-convegence}
\lim_{n\rightarrow\infty}u_n(n)=\lim_{n\rightarrow\infty}T^-_n u_0(q_n(n))=u_-(\bar{q}).
\end{equation}
Due to the uniform Lipschitz property of $\{T_n u_0\}_{n\geq1}$, the characteristic segments $\{\varphi^{t}_H\sigma_n\}_{n\geqslant1}$ are uniformly bounded in $\Sigma$, thus the sequence $\{p_n(n)\}_{n\geqslant1}$ are relatively compact.

\vspace{1em}
\textbf{Claim}:\,\,$\lim_{n\rightarrow\infty}p_n(n)=d_q u_-(\bar{q})$.

\vspace{1em}
\textit{Proof of the claim}: We argue by contradiction to assume that for a subsequence $\{n_j\}\subset\mathbb{N}$ with $\lim_{j\rightarrow\infty}n_j=+\infty$ such that $\lim_{j\rightarrow\infty}p_{n_j}(n_j):=\bar{p}\neq d_q u_-(\bar{q})$. Set $\bar{\sigma}=(\bar{q},\bar{p},u_-(\bar{q}))$ and
\[
\varphi^{-1}_H{\bar\sigma}=(q_{-1},p_{-1},u_{-1}),\quad \varphi^{1}_H\sigma=(q_1,p_1,u_1).
\]
Due to the invariance of $\Lambda_-$ under $\varphi^t_H, u_1=u_-(q_1), p_1=d_q u_-(q_1)$. Since $\varphi^t_H\sigma_{n_j}$ are characteristics, we could apply the theorem of continuous dependence of solutions on initial data to obtain $\lim_{j\rightarrow\infty}q_{n_j}(n_j-1)=q_{-1}$ and arguing as \eqref{u-convegence} to obtain $u_{-1}=\lim_{j\rightarrow\infty}u_{n_j}(n_j-1)=u_-(q_{-1})$. Combining the above equality and \eqref{eq:4}, we deduce that
\[
u_-(\bar{q})=\lim_{j\rightarrow\infty}u_{n_j}(n_j)=\lim_{j\rightarrow\infty}h_{q_{n_j}(n_j-1),u_{n_j}(n_j-1)}(q_{n_j}(n_j),1)=h_{q_{-1},u_-(q_{-1})}(\bar{q},1),
\]
where the last equality follows from Proposition \ref{fundamental-prop}. Now we compute as
\[
u_-(q_1)=T^-_2 u_-(q_1)\leq h_{q_{-1},u_-(q_{-1})}(q_1,2)<h_{\bar{q},h_{q_{-1},u_-(q_{-1})}(\bar{q},1)}(q_1,1)=h_{\bar{q},u_{-}(\bar{q})}(q_1,1)=u_-(q_1),
\]
where the first equality uses the fact that $u_-\in\mathcal{S}_-$ and the second (strict) inequality uses the assumption $\bar{p}\neq d_q u_-(\bar{q})$ and the Markov property, i.e., Proposition \ref{fundamental-prop} (2), in fact the concatenate curve constructed from the minimizers of $h_{q_{-1},u_-(q_{-1})}(\bar{q},1)$ and $h_{\bar{q},h_{q_{-1},u_-(q_{-1})}(\bar{q},1)}(q_1,1)$ has a corner at $\bar{q}$, thus can not be the minimizer (must be $C^1$) of $h_{q_{-1},u_-(q_{-1})}(q_1,2)$. The situation is depicted below and it leads to a contradiction.

\begin{figure}[h]
  \begin{center}
  \includegraphics[width=8cm]{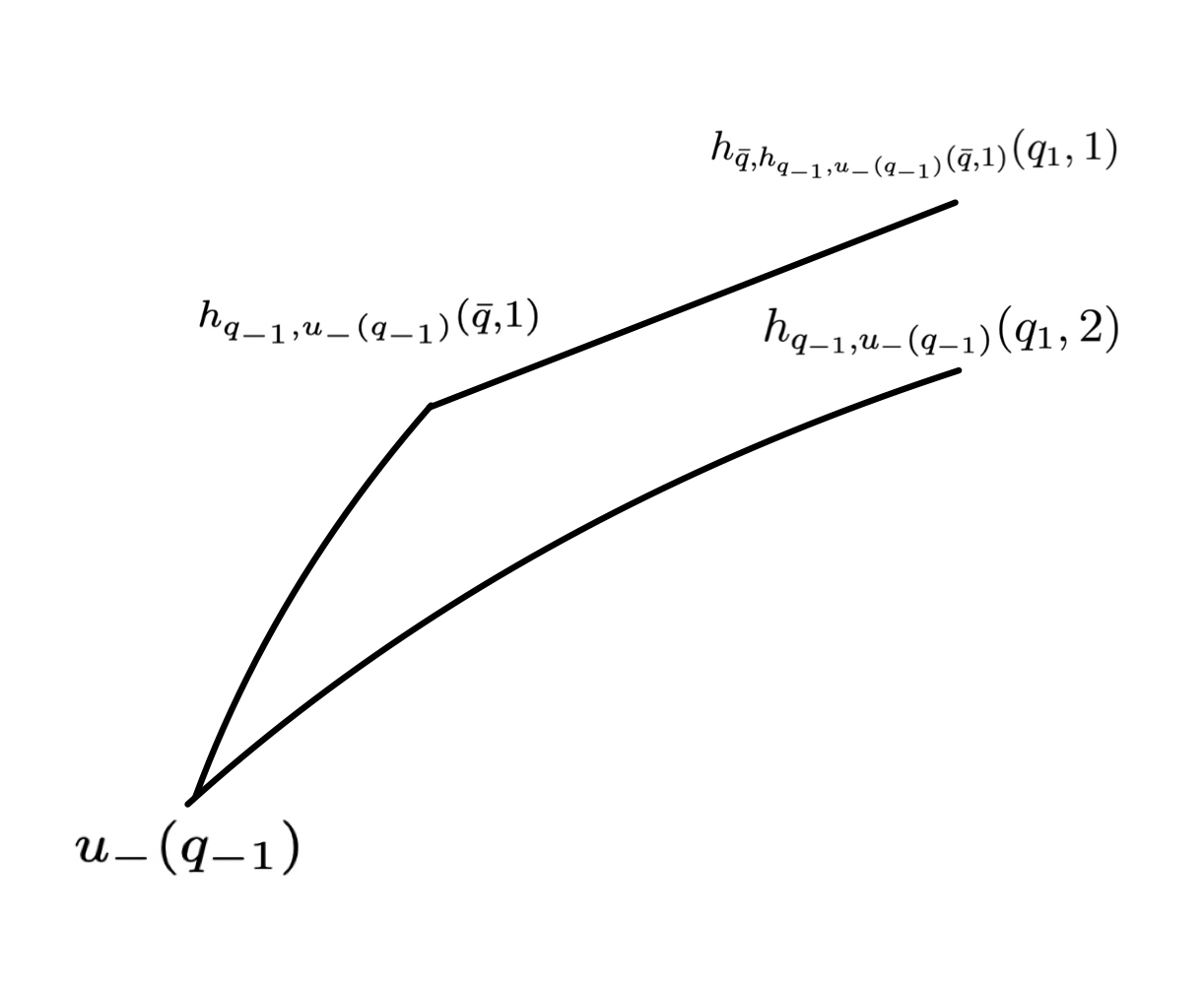}
  \end{center}
\end{figure}

(2)\,\,As in the proof of (1), we choose characteristic segments
\[
\sigma_n(\tau):=\varphi^{\tau}_H\sigma_n=(q_n(\tau), p_n(\tau), u_n(\tau)),\quad\tau\in[0,n]
\]
with $\sigma_n=(q_n, p_n, u_n)\in\Lambda_{0}$ and, up to a subsequence,
\begin{equation}\label{cali}
\lim_{n\rightarrow\infty}\sigma_n=\sigma_0=(q_0, d_q u_0(q_0), u_0(q_0))\in\Lambda_{0},\quad u_n(\tau)=T^-_\tau u_0(q_n(\tau)),
\end{equation}
here we add no assumption on the convergence of $q$-component. By continuous dependence of solutions of \eqref{ch} on the initial data, $\sigma_n(\tau)$ converges on compact intervals to a semi-infinite characteristic
\[
\sigma:[0,+\infty)\rightarrow\Sigma\quad\text{with}\quad \sigma(\tau)=(q(\tau),p(\tau),u(\tau))\quad\text{and}\quad \sigma(0)=\sigma_0.
\]
It follows from \eqref{cali} and the continuity of the function $T^-_\tau u_0, u(\tau)=T^-_\tau u_0(q(\tau))$ for all $\tau\in[0,+\infty)$. Due to the uniform Lipschitz property of $\{T_n u_0\}_{n\geq1}$, the characteristics $\sigma_n$ are uniformly bounded. This fact shows that $\sigma$ is bounded and $\omega(\sigma_0)$ is nonempty. Now we prove that $\omega(\sigma_0)\subset\Lambda_{-}$.

\vspace{1em}For any $\bar{\sigma}=(\bar{q},\bar{p},\bar{u})\in\omega(\sigma_0)$,\\

\textbf{Claim 1:}\,\,$\bar{u}=u_-(\bar{q})$. By definition, here is non-negative sequence $\{t_j\}_{j\geq1}$ with $\lim_{j\rightarrow\infty}t_j=+\infty$ and $(\bar{q},\bar{p},\bar{u})=\lim_{j\rightarrow\infty}(q(t_j),p(t_j),u(t_j))=\lim_{j\rightarrow\infty}(q(t_j),p(t_j),T^-_{t_j}u_0(q(t_j)))$. Thus for any $\epsilon>0$, there is $N_1\in\mathbb{N}$ such that for $j>N_1$,
\[
|\bar{u}-u(t_j)|<\epsilon,\quad |u_-(q(t_j))-u_-(\bar{q})|<\epsilon.
\]
On the other hand, \eqref{convergence1} implies that there is $N_2\in\mathbb{N}$ such that for $j>N_2$,
\[
|T^-_{t_j}u_0(q)-u_-(q)|<\epsilon,\quad\text{for all}\quad q\in M.
\]
Thus for $j>\max\{N_1,N_2\}$,
\begin{align*}
|\bar{u}-u_-(\bar{q})|&\,\leq|\bar{u}-u(t_j)|+|u(t_j)-u_-(q(t_j))|+|u_-(q(t_j))-u_-(\bar{q})|\\
&\,\leq|\bar{u}-u(t_j)|+|T^-_{t_j}u_0(q(t_j))-u_-(q(t_j))|+|u_-(q(t_j))-u_-(\bar{q})|\\
&\,<3\epsilon,
\end{align*}
this shows the first claim.

\vspace{1em}
\textbf{Claim 2:}\,\,$\bar{p}=d_q u_-(\bar{q})$. Notice that if $\bar{\sigma}\in\omega(\sigma_0)$, then for $\tau\in\R$,
\[
\bar{\sigma}(\tau)=\varphi^{\tau}_H\bar{x}=(\bar{q}(\tau),\bar{p}(\tau),\bar{u}(\tau))\in\omega(\sigma_0).
\]
By the first claim, we have $\bar{u}(\tau)=u_-(\bar{q}(\tau))$ for every $\tau\in\R$. For $\tau\in\R$, we set
\[
(\tilde{q}(\tau),\tilde{p}(\tau),\tilde{u}(\tau))=\varphi^{\tau}_H(\bar{q},d_q u_-(\bar{q}),\bar{u}).
\]
It follows that $\bar{q}(0)=\tilde{q}(0)=\bar{q}, \bar{u}(0)=\tilde{u}(0)=u_-(\bar{q})$. Since $u_-$ is a $C^2$ solution to \eqref{HJs},
\[
(\tilde{q}(\tau),\tilde{p}(\tau),\tilde{u}(\tau))\in\Lambda_-.
\]

\vspace{1em}
We argue as in (1) to assume that $\bar{p}\neq d_q u_-(\bar{q})$. Since $u_-\in\mathcal{S}_-$, then
\[
\bar{u}=u_-(\bar{q})=T^-_1 u_-(\bar{q})\leq h_{\bar{q}(-1),u_-(\bar{q}(-1))}(\bar{q},1)\leq\bar{u},
\]
where the last inequality follows from Corollary \ref{Minimality}, and
\begin{align*}
u_-(\tilde{q}(1))&\,=T^-_2 u_-(\tilde{q}(1))\leq h_{\bar{q}(-1),u_-(\bar{q}(-1))}(\tilde{q}(1),2)\\
&\,<h_{\bar{q},h_{\bar{q}(-1),u_-(\bar{q}(-1))}(\bar{q},1)}(\tilde{q}(1),1)=h_{\bar{q},\bar{u}}(\tilde{q}(1),1)\\
&\,=h_{\tilde{q}(0),u_-(\tilde{q}(0))}(\tilde{q}(1),1)\leq\tilde{u}(1)=u_-(\tilde{q}(1)),\\
\end{align*}
where the second inequality uses Markov property, i.e. Proposition \ref{fundamental-prop} (ii), of action function and the last inequality is again a consequence of Corollary \ref{Minimality}. This leads to the contradiction we desire.
\end{proof}

\begin{remark}
The above mechanism has the advantages that it requires no more information about the local dynamics of $\Lambda_-$. However, the condition \eqref{convergence1} asserts the convergence of the orbit $\{T^-_t u_0\}$ to a fixed solution $u_-$, which may hold only for a small part of initial data if there is no additional assumption on $H$.
\end{remark}

\subsection{Construction of connecting orbits II}
In this second part, we try to weaken the condition \eqref{convergence1} under the local stability assumption on the set of equilibria. As is mentioned in the introduction, such assumption is widely adopted by physicists working in non-equilibrium thermodynamics.
\begin{theorem}\label{graph-graph2}
Assume $u_0,u_-\in C^\infty(M,\R), (H,\Lambda_-)$ is a system satisfying \textbf{(H1)} and the limit
\begin{equation}\label{covergence2}
u_\ast(q)=\liminf_{t\rightarrow\infty}\,\,\,T^-_t u_0(q)
\end{equation}
exists uniformly in $q\in M$ with
\begin{enumerate}[(1)]
  \item $u_\ast\geq u_-$ on $M$,
  \item the set $\{q\in M\,:\, u_\ast(q)=u_-(q)\}$ is nonempty,
\end{enumerate}
then there exists $\sigma_0\in\Lambda_{0}$ such that $\omega(\sigma_0)$ is a nonempty subset of $\Lambda_-$.
\end{theorem}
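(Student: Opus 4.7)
The plan is to mimic the argument of Theorem \ref{graph-graph1} but localized to a single base point $\bar q \in M$ where $u_\ast(\bar q)=u_-(\bar q)$, and then to use the local attractor hypothesis \textbf{(H1)} to supply the extra convergence that the weakened $\liminf$ assumption cannot provide directly. By hypothesis (2) I fix such a $\bar q$; by definition of $\liminf$ there is a sequence $t_n\to\infty$ with $T^{-}_{t_n} u_0(\bar q)\to u_-(\bar q)$. Invoking Theorem \ref{global-chm} at $(\bar q, t_n)$ with initial datum $u_0$, I obtain $\sigma_n\in\Lambda_0$ whose forward orbit $\varphi^{\tau}_H\sigma_n=(q_n(\tau),p_n(\tau),u_n(\tau))$ on $[0,t_n]$ satisfies $q_n(t_n)=\bar q$, $u_n(t_n)=T^{-}_{t_n}u_0(\bar q)$, and $u_n(\tau)=T^{-}_{\tau}u_0(q_n(\tau))$ for all $\tau\in[0,t_n]$. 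Uniform Lipschitz estimates for $\{T^{-}_t u_0\}_{t\geqslant 1}$ allow me to pass to a subsequence with $p_n(t_n)\to \bar p$.

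The crux is to prove $\bar p=d_q u_-(\bar q)$, which places $\lim \varphi^{t_n}_H\sigma_n$ on $\Lambda_-$. I argue by contradiction as in Theorem \ref{graph-graph1}(1), but must first identify the $u$-coordinate of the backward shift. By continuous dependence on initial data, $q_n(t_n-1)\to q_{-1}$ where $\varphi^{-1}_H(\bar q,\bar p,u_-(\bar q))=(q_{-1},p_{-1},u_{-1})$; passing to a further subsequence, $u_n(t_n-1)\to u^\ast$. Letting $n\to\infty$ in the Markov identity $u_n(t_n)=h_{q_n(t_n-1),u_n(t_n-1)}(\bar q,1)$ and using Proposition \ref{fundamental-prop}(3) yields $u_-(\bar q)=h_{q_{-1},u^\ast}(\bar q,1)$. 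The uniform $\liminf$ hypothesis combined with (1) forces $u^\ast\geqslant u_-(q_{-1})$, while the fact that $u_-\in\mathcal{S}_-$ gives the reverse inequality $u_-(\bar q)=T^{-}_1 u_-(\bar q)\leqslant h_{q_{-1},u_-(q_{-1})}(\bar q,1)$; the strict $u_0$-monotonicity of $h$ (Proposition \ref{fundamental-prop}(1)) then forces $u^\ast=u_-(q_{-1})$. With this identification in hand, the corner-at-$\bar q$ computation of Theorem \ref{graph-graph1}(1), carried out along the $\Lambda_-$-orbit $\tilde\sigma(\tau)=\varphi^{\tau}_H(\bar q,d_q u_-(\bar q),u_-(\bar q))$ and evaluated at $\tilde q(1)$, produces the contradiction $u_-(\tilde q(1))<u_-(\tilde q(1))$ under the assumption $\bar p\neq d_q u_-(\bar q)$.

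Once $\varphi^{t_n}_H\sigma_n\to(\bar q,d_q u_-(\bar q),u_-(\bar q))\in\Lambda_-$ is established, I fix an $n$ so large that $\varphi^{t_n}_H\sigma_n$ lies in the attracting neighborhood $\mathcal{O}_-$ provided by \textbf{(H1)}. Then the forward orbit of $\sigma_n$, which after time translation coincides with the forward orbit of $\varphi^{t_n}_H\sigma_n$, remains in $\mathcal{O}_-$, is bounded, and has nonempty $\omega$-limit contained in $\Lambda_-$. Setting $\sigma_0:=\sigma_n\in\Lambda_0$ finishes the proof. The main obstacle is the identification $u^\ast=u_-(q_{-1})$: the hypothesis only yields a one-sided uniform bound $T^{-}_t u_0\geqslant u_\ast-\varepsilon$ for $t$ large, together with pointwise $\liminf$ at $\bar q$, so the full value of $u^\ast$ must be extracted by squeezing between the minimality identity inherited from the characteristic and the fixed-point inequality $u_-=T^{-}_1 u_-$, using strict monotonicity of $h$ in the $u_0$-variable as the essential lever.
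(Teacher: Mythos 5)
Your proof is correct, but it takes a genuinely different route from the paper's. The paper picks, for each $t_n$, the point $q_n\in M$ that \emph{minimizes} the semi-concave function $T^{-}_{t_n}u_0 - u_-$; semi-concavity then forces $T^{-}_{t_n}u_0$ to be differentiable at $q_n$ with $d_q T^{-}_{t_n}u_0(q_n)=d_q u_-(q_n)$, so the lifts $\sigma_n=(q_n,d_q u_-(q_n),T^{-}_{t_n}u_0(q_n))$ converge to a point of $\Lambda_-$ without any contradiction argument on the momentum; the only remaining work is to show, via the equi-Lipschitz continuity and the minimizing choice of $q_n$, that any accumulation point $\bar q$ satisfies $u_\ast(\bar q)=u_-(\bar q)$. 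You instead anchor at a single $\bar q$ supplied by hypothesis (2), pick $t_n$ realizing the $\liminf$ there, and then re-run the corner-at-$\bar q$ contradiction from the proof of Theorem~\ref{graph-graph1}(1) to force $p_n(t_n)\to d_q u_-(\bar q)$. The price you pay for the weaker $\liminf$ hypothesis is the extra identification $u^\ast=u_-(q_{-1})$ for the backward-shifted point, which you obtain by squeezing between the fixed-point inequality $u_-(\bar q)=T^{-}_1u_-(\bar q)\leqslant h_{q_{-1},u_-(q_{-1})}(\bar q,1)$ and the limit Markov identity $u_-(\bar q)=h_{q_{-1},u^\ast}(\bar q,1)$, using strict $u_0$-monotonicity. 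Note that the lower bound $u^\ast\geqslant u_-(q_{-1})$ does not literally read off from a pointwise $\liminf$; it uses the uniformity (or, equivalently, the compactness of $M$ together with the equi-Lipschitz bounds) to convert hypothesis (1) into a uniform bound $T^{-}_t u_0\geqslant u_- -\varepsilon$ for $t$ large, which you correctly invoke. What the paper's approach buys is avoiding the corner argument altogether, at the cost of appealing to semi-concavity of $T^{-}_t u_0-u_-$; what yours buys is staying entirely within the action-function calculus already developed for Theorem~\ref{graph-graph1}.
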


\begin{proof}
Since $\Lambda_-$ is a local attractor, it remains to show the existence of $\sigma_0\in\Lambda_0$ such that for some $T>0$,
\[
\varphi^{T}_H \sigma_0\in\mathcal{O}_-.
\]
For each $n\geq1$, we choose $q_n\in M, \{t_n\}_{n\in \mathbb{N}} $ such that $\lim_{n\rightarrow\infty}T^-_{t_n} u_0(q)=\liminf_{t\rightarrow\infty}T^-_t u_0(q)$ and
\begin{equation}\label{max-mini}
T^-_{t_n} u_0(q_n)-u_-(q_n)=\min_{q\in M}\{T^-_{t_n} u_0(q)-u_-(q)\}.
\end{equation}
Since $T^-_{t_n} u_0-u_-$ is a semi-concave function on $M$, it is differentiable at the minimal point $q_n$ and
\begin{equation}\label{derivative}
d_q T^-_{t_n} u_0(q_n)=d_q u_-(q_n).
\end{equation}
We set $\sigma_n:=(q_n, d_q T^-_{t_n} u_0(q_n), T^-_{t_n} u_0(q_n))$, then\\

\textbf{Claim}: Any limit point of the sequence $\{\sigma_n\}_{n\geq1}$ belongs to $\Lambda_-$.\\

\vspace{0.3em}
Up to a subsequence, we may assume $\lim_{n\rightarrow\infty}q_n=\bar{q}$ and $\lim_{n\rightarrow\infty}T^-_{t_n} u_0(q_n)$ exists. Applying \eqref{derivative} and the fact that $u_-\in C^\infty(M,\R)$,
\[
\lim_{n\rightarrow\infty}d_q T^-_{t_n} u_0(q_n)=d_q u_-(\bar{q}).
\]
By the definition of $u_\ast, q_n$ as well as the equi-Lipschitz continuity of $\{T^-_{t_n} u_0\}_{n\geq1}$, for any $q\in M$,
\begin{align*}
&\,\,\,u_\ast(q)-u_-(q)\\
=&\,\lim_{n\rightarrow\infty}[T^-_{t_n} u_0(q)-u_-(q)]\geq\lim_{n\rightarrow\infty}[T^-_{t_n} u_0(q_n)-u_-(q_n)]\\
=&\,\lim_{n\rightarrow\infty}[T^-_{t_n} u_0(\bar{q})-u_-(\bar{q})]= u_\ast(\bar{q})-u_-(\bar{q}).
\end{align*}
By the fact that $u_\ast\geq u_-$ and the set $\{q\in M:u_\ast(q)=u_-(q)\}$ is nonempty, we have
\[
u_\ast(\bar{q})=u_-(\bar{q}),\quad\lim_{n\rightarrow\infty}T^-_{t_n} u_0(q_n)=u_-(\bar{q}).
\]
This verifies the claim. Thus for all $n$ sufficiently large, $\sigma_n\in\mathcal{O}_-$. To conclude this case, we fix such an $\sigma_n$ and use Theorem \ref{global-chm} to find $\sigma_n(0):=\varphi^{-t_n}_H\sigma_n\in\Lambda_0$ such that the corresponding characteristic segments $\varphi^{\tau}_H\sigma_n(0), \tau\in[0,t_n]$ connects $\Lambda_0$ with $\sigma_n$.
\end{proof}

\section{Homotopic criteria for the convergence of solution semigroup}
Due to the abstract mechanisms established in the last section, the problem of constructing semi-infinite orbits connecting an arbitrary Legendrian graph $\Lambda_0$ to the equilibria submanifold $\Lambda_-$ is reduced to the study of large time behavior of the generating data $u_0$. The main goal of this section is to provide some criteria to ensure the required behavior of $u_0$. We divide this section in two parts, according to whether the local stability is assumed for the set of equilibria.

\subsection{Homotopy method I}
Due to the definition of sub and super-deformation defined in the introduction, one could use the $t$-monotonicity of solution semigroups, indicated in Proposition \ref{mono1}, to show the following
\begin{theorem}\label{homotopy1}
Assume $u_0\in C(M,\R)$ and $(H,\Lambda_-)$ is a system. If there exists
\begin{enumerate}[(a)]
  \item   a sub-deformation $V:M\times[0,1]\rightarrow\R$ of $u_-$ such that
        \[
        V(q,0)\leq u_0(q)\leq u_-(q)\quad\text{for any }q\in M,
        \]

  \item  a super-deformation $V:M\times[0,1]\rightarrow\R$ of $u_-$ such that
        \[
        V(q,0)\geq u_0(q)\geq u_-(q)\quad\text{for any }q\in M,
        \]

  \item  a sub-deformation $\underline V:M\times[0,1]\rightarrow\R$ and  a super-deformation $\overline V:M\times[0,1]\rightarrow\R$ of $u_-$  such that
   $$
  \underline  V(\cdot ,0 ) \leqslant \min\{ u_0, u_-\}, \quad \max\{ u_0, u_-\}   \leqslant \overline V(\cdot ,0 ) .
   $$
\end{enumerate}
then \,\,$\lim_{t\rightarrow+\infty}T^-_t u_0=u_-$.
\end{theorem}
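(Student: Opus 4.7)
My plan is to reduce all three cases to showing that the semigroup applied to the outermost profile converges uniformly to $u_-$, and then squeeze $T^-_t u_0$ using the monotonicity of $\{T^-_t\}_{t\geqslant0}$ in the initial data (Proposition \ref{prop-sg}(1)) together with the fact that $T^-_t u_- \equiv u_-$ (Proposition \ref{cov1}). In case $(a)$ the hypothesis $V(\cdot,0) \leqslant u_0 \leqslant u_-$ yields
\[
T^-_t V(\cdot,0) \leqslant T^-_t u_0 \leqslant u_-,
\]
so once I know $T^-_t V(\cdot,0) \to u_-$ uniformly, the claim follows; case $(b)$ is dually squeezed between $u_-$ and $T^-_t V(\cdot,0)$, and case $(c)$ is then immediate by applying $(a)$ to $\underline V$ and $(b)$ to $\overline V$ and squeezing again.

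\textbf{The monotone orbit in case $(a)$.} Since $V(\cdot,0)$ is a strict classical subsolution of \eqref{HJs}, Proposition \ref{mono1}(1) gives $T^-_t V(\cdot,0) \geqslant V(\cdot,0)$, and applying Proposition \ref{prop-sg}(1) inside the semigroup property shows that $t \mapsto T^-_t V(\cdot,0)$ is pointwise non-decreasing; it is bounded above by $u_-$. Under \textbf{(H2)}, orbits trapped in a bounded $u$-strip have a priori bounded momenta, which upgrades to equi-Lipschitz estimates for $\{T^-_t V(\cdot,0)\}_{t\geqslant 1}$. Thus the pointwise monotone limit $w := \lim_{t\to\infty} T^-_t V(\cdot,0)$ is Lipschitz, Dini's theorem promotes the convergence to uniform, and Proposition \ref{prop-sg}(4) combined with the semigroup property yields $T^-_s w = w$ for every $s \geqslant 0$; by Proposition \ref{cov1}, $w$ is a viscosity solution of \eqref{HJs} with $w \leqslant u_-$.

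\textbf{The homotopy step (main obstacle).} The nontrivial point is to show that $w = u_-$, for which I propose a continuation argument along the deformation parameter. Set
\[
A := \bigl\{\, s \in [0,1] \,:\, V(\cdot,s) \leqslant w\,\bigr\}.
\]
Continuity of $V$ in $s$ makes $A$ closed, and monotonicity of $T^-_t V(\cdot,0)$ in $t$ gives $V(\cdot,0) \leqslant w$, so $0 \in A$. Let $s^\ast := \sup A \in A$, and suppose for contradiction that $s^\ast < 1$. Then $V(\cdot, s^\ast)$ is a \emph{strict} subsolution, and from $V(\cdot,s^\ast) \leqslant w = T^-_t w$ together with Proposition \ref{prop-sg}(1) we have $T^-_t V(\cdot, s^\ast) \leqslant w$ for all $t \geqslant 0$; meanwhile Proposition \ref{mono1}(1) yields pointwise strict increase $T^-_1 V(\cdot,s^\ast) > V(\cdot,s^\ast)$, and compactness of $M$ gives a uniform gap
\[
\varepsilon := \min_{q\in M} \bigl[T^-_1 V(\cdot,s^\ast)(q) - V(q,s^\ast)\bigr] > 0.
\]
By continuity of $V$ in $s$, for all sufficiently small $\delta > 0$,
\[
V(\cdot, s^\ast + \delta) \leqslant V(\cdot, s^\ast) + \varepsilon/2 \leqslant T^-_1 V(\cdot, s^\ast) \leqslant w,
\]
so $s^\ast + \delta \in A$, contradicting the definition of $s^\ast$. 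Hence $s^\ast = 1$, which gives $u_- = V(\cdot,1) \leqslant w$, and combined with $w \leqslant u_-$ we conclude $w = u_-$.

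\textbf{Cases $(b)$ and $(c)$.} Case $(b)$ is symmetric once we record the supersolution analog of Proposition \ref{mono1}(1): if $v$ is a (strict) supersolution of \eqref{HJs}, then $T^-_t v \leqslant v$ (resp.\ $<$ for $t>0$); this is the standard comparison principle applied to the time-independent supersolution $U(q,t) = v(q)$ of \eqref{HJe}. Then $T^-_t V(\cdot,0)$ is non-increasing and bounded below by $u_-$, and the dual continuation argument with $A' := \{s : V(\cdot,s) \geqslant w\}$ and the gap coming from strict decrease forces the limit to equal $u_-$. Case $(c)$ is then immediate by the squeeze $T^-_t \underline V(\cdot,0) \leqslant T^-_t u_0 \leqslant T^-_t \overline V(\cdot,0)$ and the fact that both ends converge uniformly to $u_-$. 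I expect the homotopy step to be the principal conceptual obstacle; the equi-Lipschitz bookkeeping needed to pass to a continuous limit and apply Dini is routine but should be verified carefully.
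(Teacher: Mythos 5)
Your proposal is correct and follows essentially the same route as the paper: squeeze $T^-_t u_0$ between $T^-_t V(\cdot,0)$ and $u_-$, use Proposition \ref{mono1} to make $t\mapsto T^-_t V(\cdot,0)$ monotone, pass to the fixed point $w=v_-$ via equi-Lipschitz bounds, and then use the strict-subsolution property of $V(\cdot,s)$ for $s<1$ to force $w=u_-$. Your final step is a continuation argument on $\{s:V(\cdot,s)\leqslant w\}$ driven by a uniform gap $\varepsilon$, while the paper instead locates a touching point $q_0$ and contradicts the fixed-point identity $T^-_t v_-(q_0)=v_-(q_0)$ directly there; these are two phrasings of the same mechanism and differ only in bookkeeping.
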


\begin{proof}
We shall show the conclusion holds under the assumption (a) and the proof under (b) is completely similar. Due to Proposition \ref{prop-sg} (1), we obtain that
\begin{equation}\label{sol-mono}
T^-_t V(q,0)\leq T^-_t u_0(q)\leq T^-_t u_-(q)=u_-(q),\quad\text{for any }\,\,q\in M.
\end{equation}
Thus to prove the conclusion, it is enough to prove
\begin{equation}\label{convergence3}
\lim_{t\rightarrow+\infty}T^-_t V(\cdot,0)=u_-.
\end{equation}

Since $V(\cdot,0)$ is a subsolution to \eqref{HJs}, Proposition \ref{mono1} guarantees that for $0\leq t\leq\tau$,
\[
T^-_t V(\cdot,0)\leq T^-_\tau V(\cdot,0).
\]
Combining \eqref{sol-mono} and the equi-Lipschitz property of $\{T^-_t V(\cdot,0)\}_{t\geq1}$,
\begin{itemize}
  \item the uniform limit $v_-=\lim_{t\rightarrow+\infty}T^-_t V(\cdot,0)$ exists and $v_-\in\mathcal{S}_-$ with $V(\cdot,0)\leq v_-\leq u_-$ on $M$.
\end{itemize}
Assume $v_-\neq u_-$, then there are $s_0\in[0,1), q_0\in M$ such that
\[
V(q,s_0)\leq v_-(q)\quad\text{for any}\,\,q\in M,\quad\text{and}\quad V(q_0,s_0)=v_-(q_0)<u_-(q_0).
\]
Then by Definition \ref{sub-super-intro}, as a subsolution to \eqref{HJs}, $V(\cdot,s_0)$ is strict and applying Proposition \ref{mono1} again,
\[
T^-_t v_-(q_0)\geq T^-_t V(q_0,s_0)>V(q_0,s_0)=v_-(q_0)
\]
for all $t>0$, which contradicts the fact that $v_-\in\mathcal{S}_-$.

\vspace{1em}

  (c): Applying (a),(b) above to the assumption
$$
	\underline V(\cdot ,0 )  \leqslant \underline{u_0}:=\min\{u_0, u_{-}\}\leqslant u_- \leqslant\overline{u_0}:= \max\{u_0, u_{-} \} \leqslant \overline V(\cdot ,0 ),
$$
we arrive at
$$
\lim_{t\rightarrow+\infty}T^-_t  \underline{u_0}=u_-, \quad \lim_{t\rightarrow+\infty}T^-_t \overline{u_0}=u_-.
$$
Due to $\underline{ u_0} \leqslant u_0 \leqslant  \overline{ u_0}  $, one follows that $\lim_{t\rightarrow+\infty}T^-_t  u_0=u_- $.
\end{proof}

\subsection{Homotopy method II}
If the priori stability of the manifold of equilibria is assumed, then we could replace the existence of sub \,\,(resp. super)-deformation in Theorem \ref{homotopy1} by a more flexible condition. So we propose a second
\begin{theorem}\label{homotopy2}
Assume $u_0\in C(M,\R)$ and $(H,\Lambda_-)$ is a system.
 Assume one of the following conditions
\begin{enumerate}[(a)]
  \item[$(a')$] there is a continuous function $V:M\times[0,1]\rightarrow\R$ with $V(\cdot ,0 )$ is a $C^2$ subsolution to \eqref{HJs} and
        \[
        	 H(q,d_q u_-(q),V(q,s))<0,  \quad \forall q\in M \quad   \text{and}\quad V(\cdot,0) \leq u_0\leq u_-=V(\cdot,1),
        \]

  \item[$(b')$] there is a continuous function $V:M\times[0,1]\rightarrow\R$ with $V(\cdot ,0 )$ is a $C^2$ supersolution to \eqref{HJs} and
      \[
        	 H(q,d_q u_-(q),V(q,s)) >0,  \quad \forall q\in M \quad   \text{and}\quad V(\cdot,0) \geq u_0\geq u_-=V(\cdot,1),
        \]
  \item [$(c')$] there are continuous functions $\underline{V}, \overline{V}:M\times[0,1]\rightarrow\R$ such that $\underline{V}(\cdot ,0 ),\overline{V}(\cdot ,0 )$ are a $C^2$ subsolution and a $C^2$ supersolution to \eqref{HJs} respectively and for all $q\in M$,
        \begin{align*}
        & H(q,d_q u_-(q),\underline{V}(q,s))<0,\quad\quad H(q,d_q u_-(q),\overline{V}(q,s))>0,\\
        &\underline{V}(q,0)\leqslant\min\{u_0(q), u_-(q)\}, \quad\max\{u_0(q), u_-(q)\}  \leqslant\overline{V}(q,0).
        \end{align*}
\end{enumerate}
is satisfied, then the uniform limit $u_\ast(q)=\liminf_{t\rightarrow\infty}T^-_t u_0(q)$ exists and the set
\[
\{q\in M\,:\,u_\ast(q)=u_-(q)\}\quad\text{is nonempty.}
\]

\end{theorem}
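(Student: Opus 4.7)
The plan is to show that under each of $(a')$, $(b')$, $(c')$ the semigroup $T^-_t u_0$ converges uniformly on $M$ to $u_-$, which immediately yields $u_\ast\equiv u_-$ and hence the required nonempty coincidence set (in fact all of $M$). I work out case $(a')$ in detail; the other cases are symmetric.

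First I would build a monotone lower barrier. Since $V(\cdot,0)$ is a $C^\infty$ strict subsolution of \eqref{HJs}, Proposition \ref{mono1} gives $V(\cdot,0)\le T^-_t V(\cdot,0)$; combined with the semigroup property \eqref{sg} this makes $t\mapsto T^-_t V(\cdot,0)(q)$ non-decreasing for every $q\in M$. Since $V(\cdot,0)\le u_-=T^-_t u_-$, the monotonicity of $T^-_t$ yields $T^-_t V(\cdot,0)\le u_-$, so the pointwise limit $v_-(q):=\lim_{t\to\infty}T^-_t V(\cdot,0)(q)$ exists; the equi-Lipschitz estimate of Proposition \ref{prop-sg} (3) together with Dini's theorem on the compact manifold $M$ promote this to uniform convergence, and the stability computation recalled at the end of Section 3.1 shows $v_-\in\mathcal S_-$, i.e.\ $v_-$ is a viscosity solution of \eqref{HJs}. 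By construction $V(\cdot,0)\le v_-\le u_-$.

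The heart of the argument is to upgrade this inequality to the equality $v_-\equiv u_-$. Suppose, for contradiction, that $c:=\min_{M}(v_--u_-)<0$ is attained at some $\tilde q$. The $C^\infty$ function $\phi(q):=u_-(q)+c$ satisfies $\phi\le v_-$ with $\phi(\tilde q)=v_-(\tilde q)$, so $v_--\phi$ has a local minimum at $\tilde q$. The viscosity \emph{super}solution property of $v_-$ applied with test function $\phi$ delivers
\[
H(\tilde q,d_q u_-(\tilde q),u_-(\tilde q)+c)\ge 0.
\]
On the other hand $V(\tilde q,0)\le v_-(\tilde q)=u_-(\tilde q)+c<u_-(\tilde q)=V(\tilde q,1)$, so the continuity of $s\mapsto V(\tilde q,s)$ and the intermediate value theorem produce $s^\ast\in[0,1)$ with $V(\tilde q,s^\ast)=u_-(\tilde q)+c$; hypothesis $(a')$ then gives
\[
H(\tilde q,d_q u_-(\tilde q),u_-(\tilde q)+c)=H(\tilde q,d_q u_-(\tilde q),V(\tilde q,s^\ast))<0,
\]
contradicting the previous inequality. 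Therefore $v_-\equiv u_-$. Monotonicity of $T^-_t$ applied to $V(\cdot,0)\le u_0\le u_-$ sandwiches $T^-_t V(\cdot,0)\le T^-_t u_0\le u_-$, so the squeeze forces $T^-_t u_0\to u_-$ uniformly.

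Case $(b')$ runs symmetrically: $V(\cdot,0)$ is now a $C^\infty$ strict supersolution, $T^-_t V(\cdot,0)$ is non-increasing with uniform limit $v_+\ge u_-$, and assuming $c:=\max(v_+-u_-)>0$ at $\bar q$ one uses $\phi=u_-+c$ touching $v_+$ from above, so that the viscosity \emph{sub}solution property of $v_+$, paired with the IVT and $(b')$, forces $v_+\equiv u_-$. Case $(c')$ runs the two arguments in parallel on $\underline V(\cdot,0)$ and $\overline V(\cdot,0)$ and sandwiches $T^-_t u_0$ between the two uniformly convergent barriers. The main obstacle is the identification step: two comparable viscosity solutions of \eqref{HJs} need not coincide when $H$ is not monotone in $u$, so the usual strong maximum principle is unavailable. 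The homotopy hypothesis is designed precisely as a substitute, and success of the contradiction requires two careful choices --- the vertical translate $u_-+c$ as the test function (rather than $V(\cdot,s^\ast)$ itself, which is only continuous) so that the Hamiltonian appearing in the viscosity test matches the one prescribed in $(a')$, and the endpoint equality $V(\cdot,1)=u_-$ which enables the intermediate value step to reach the critical value $u_-(\tilde q)+c$.
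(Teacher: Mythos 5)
Your proof is correct and delivers the claimed conclusion, but it is a genuinely different argument from the one in the paper, and in fact proves a slightly stronger statement in cases $(b')$ and $(c')$. The preparatory steps coincide: both arguments build the monotone barrier $T^-_t V(\cdot,0)$, invoke the stability of $\mathcal S_-$ under uniform limits to get $v_\ast=\lim_t T^-_t V(\cdot,0)\in\mathcal S_-$, and then must identify $v_\ast$ with $u_-$. Where you differ is precisely at this identification. The paper argues variationally: it picks the point where $u_--v_\ast$ is extremal, produces a backward calibrated curve $\gamma$ for $v_\ast$ through that point (via Proposition \ref{Implicit variational} and Definition \ref{semi-group}), and Taylor-expands the calibration identity to first order in $t$, using $\sup_{\dot q}\big[\langle d_q u_-(q_0),\dot q\rangle - L(q_0,\dot q,v_\ast(q_0))\big]=H(q_0,d_q u_-(q_0),v_\ast(q_0))<0$ to force a strict decrease of $u_--v_\ast$ along $\gamma$ --- contradicting extremality. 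You instead exploit directly that $v_\ast$, being a fixed point, is a viscosity solution, and you test it with the smooth function $\phi=u_-+c$ at the extremal point; the viscosity supersolution (resp.\ subsolution) inequality pins down the sign of $H(\tilde q,d_q u_-(\tilde q),u_-(\tilde q)+c)$, while the intermediate value theorem applied to $s\mapsto V(\tilde q,s)$ together with the homotopy hypothesis gives the opposite strict sign. This is the more elementary route: it never re-enters the implicit Lagrangian variational principle at this step, and it makes the role of the $C^2$ regularity of $u_-$ and of the endpoint condition $V(\cdot,1)=u_-$ transparent. Moreover, in case $(b')$ the paper runs its argument only at a minimum of $v_\ast-u_-$ and therefore concludes merely that $v_\ast$ touches $u_-$ at some point, stating the nonempty coincidence set as the final output; your argument at the maximum of $v_\ast-u_-$ forces $v_\ast\equiv u_-$, hence $T^-_t u_0\to u_-$ uniformly, which strengthens the published conclusion (and incidentally makes the claim that ``the uniform limit $u_\ast$ exists'' literal in all three cases, where the paper's $(b')$ proof is somewhat implicit about this). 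One small terminological slip: in $(a')$ you call $V(\cdot,0)$ a ``$C^\infty$ strict subsolution'' --- the hypothesis only gives a $C^2$ subsolution, but you never use strictness or extra smoothness there, so nothing is affected.
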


\begin{proof}
$(a')$: Since $V(\cdot,0) \leq u_0\leq u_-$ on $M$, due to Proposition \ref{prop-sg} (1), we have
\begin{equation}\label{sol-mono1}
T^-_t V(q,0) \leq T^-_t u_0(q)\leq T^-_t u_-(q)=u_-(q),\quad\text{for any }\,\,q\in M.
\end{equation}
Using the assumption that   $ V(\cdot,0)$ is a subsolution to \eqref{HJs},  Proposition \ref{mono1} implies that for $0\leq t\leq\tau$,
\begin{equation}\label{mono-t}
T^-_t  V(\cdot,0) \leq T^-_\tau  V(\cdot,0).
\end{equation}
Combining \eqref{sol-mono1} and the equi-Lipschitz property of $\{T^-_t  V(\cdot,0)\}_{t\geq1}$, the uniform limit
$$
v_\ast:=\lim_{t\rightarrow+\infty}T^-_t  V(\cdot,0)
$$
exists with $v_\ast\in\mathcal{S}_-$ and
\[
V(\cdot,0)\leq v_\ast\leq u_-\quad\text{on}\,\,M.
\]
We now show that $v_\ast=u_-$ by contradiction. Assume there is $q_0\in M$ such that
\begin{equation}\label{max}
u_-(q_0)-v_\ast(q_0)=\max_{q\in M}[u_-(q_0)-v_\ast(q_0)]>0.
\end{equation}
By the continuity of the function $V$, there are $s_0\in[0,1),\delta>0$ such that $V(q_0,s_0)=v_\ast(q_0)$ and
\begin{align*}
&\,\sup_{\dot{q}\in T_{q_0}M}[\langle d_q u_-(q_0),\dot{q}\rangle-L(q_0,\dot{q},v_\ast(q_0))]\\
=&\,H(q_0,d_q u_-(q_0),v_\ast(q_0))=H(q_0,d_q u_-(q_0),V(q_0,s_0))=-\delta<0.
\end{align*}
By Proposition \ref{Implicit variational} and Definition \ref{semi-group}, there is a $\gamma\in C^1([-1,0],M)$ with $\gamma(0)=q_0$ such that
\[
T^-_t v_\ast(q_0)=h_{\gamma(-t),v_\ast(\gamma(-t))}(q_0,t)=v_\ast(\gamma(-t))+\int_{-t}^0 L(\gamma(\tau), \dot{\gamma}(\tau),h_{\gamma(-t),v_\ast(\gamma(-t))}(\gamma(\tau), \tau) )\ d\tau.
\]
holds for all $t\in[0,1]$. Due to the fact that $v_\ast\in\mathcal{S}_-$,\,\,for $t>0$ sufficiently small,
\begin{align*}
&\,u_-(q_0)-v_\ast(q_0)=u_-(q_0)-T^-_t v_\ast(q_0)\\
=&\,[u_-(\gamma(-t))-v_\ast(\gamma(-t))]+[u_-(\gamma(0))-u_-(\gamma(-t))]-\int_{-t}^0 L(\gamma(\tau), \dot{\gamma}(\tau),h_{\gamma(t),v_\ast(\gamma(t))}(\gamma(\tau) ,\tau)\ d\tau\\
=&\,[u_-(\gamma(-t))-v_\ast(\gamma(-t))]+t\,[\langle d_q u_-(q_0),\dot{\gamma}(0)\rangle-L(q_0,\dot{\gamma}(0),v_\ast(q_0))]+o(t)\\
\leq&\,[u_-(\gamma(-t))-v_\ast(\gamma(-t))]-\delta t+o(t)\\
<&\,u_-(\gamma(-t))-v_\ast(\gamma(-t))\leq u_-(q_0)-v_\ast(q_0),
\end{align*}
where the last inequality uses \eqref{max}. This leads to the contradiction.

\vspace{1em}
To complete the proof, we use \eqref{sol-mono1} to obtain
$$
u_-=v_\ast\leqslant u_\ast\leqslant\limsup_{t\rightarrow\infty}T^-_t u_0\leqslant  u_-,
$$
and it follows that $u_\ast(q)=v_\ast(q)=u_-(q)$ for any $q\in M$.

\vspace{1em}
$(b')$:\,\,By reversing the signs in the inequalities \eqref{sol-mono1}-\eqref{mono-t} and arguing with the same reasoning as in the beginning of the above proof, the uniform limit $v_\ast:=\lim_{t\rightarrow+\infty}T^-_t  V(\cdot,0 ) $ exists with $v_\ast\in\mathcal{S}_-$ and
\[
 V(\cdot,0 ) \geq v_\ast\geq u_-\quad\text{on}\,\,M.
\]
It follows from the fact that $V(0,\cdot)\geqslant u_0$ and Proposition \ref{prop-sg} (1) that
\[
v_\ast\geqslant\limsup_{t\rightarrow\infty}T^-_t u_0\geqslant\liminf_{t\rightarrow\infty}T^-_t u_0=u_\ast\geqslant u_-
\]
To complete the proof, it is enough show that the set $\{q\in M\,:\,v_\ast(q)=u_-(q)\}$ is nonempty, we argue by contradiction. Assume $v_\ast>u_-$ on $M$ and there is $q_0\in M$ with
\begin{equation}\label{min-3}
v_\ast(q_0)-u_-(q_0)=\min_{q\in M}\,\,[v_\ast(q)-u_-(q)]>0.
\end{equation}
By the continuity of the function $V$, there is $s_0\in[0,1)$ such that $V(q_0,s_0)=v_\ast(q_0)$ and
\[
\sup_{\dot{q}\in T_{q_0}M}[\langle d_q u_-(q_0),\dot{q}\rangle-L(q_0,\dot{q},v_\ast(q_0))]=H(q_0,d_q u_-(q_0),v_\ast(q_0))=H(q_0,d_q u_-(q_0),V(q_0,s_0))>0.
\]
Thus there is $\dot{q}_0\in T_{q_0}M$ such that
\[
\delta:=\langle d_q u_-(q_0),\dot{q}_0\rangle-L(q_0,\dot{q}_0,v_\ast(q_0))>0.
\]
Setting $(q(t),p(t),u(t))=\varphi^t_H(q_0,p_0,v_\ast(q_0))$, where $p_0=\frac{\partial L}{\partial\dot{q}}(q_0,\dot{q}_0,v_\ast(q_0))$, then for $t>0$ sufficiently small, we have
\begin{align*}
&\,u(t)-u_-(q(t))=[u(0)+t\dot{u}(0)+o(t)]-[u_-(q(0))+t\langle d_q u_-(q(0)),\dot{q}(0)\rangle+o(t)]\\
=&\,[v_\ast(q_0)-u_-(q_0)]-[\langle d_q u_-(q_0),\dot{q}_0\rangle-L(q_0,\dot{q}_0,v_\ast(q_0))]t+o(t)\\
=&\,[v_\ast(q_0)-u_-(q_0)]-\delta t+o(t),
\end{align*}
where for the second equation, the charactersitic system \eqref{ch} as well as the definition of $p_0$ are used. Since $v_\ast\in\mathcal{S}_-$, it follows from Definition \ref{semi-group} and Corollary \ref{Minimality} that
\begin{align*}
&\,v_\ast(q(t))-u_-(q(t))=T^-_t v_\ast(q(t))-u_-(q(t))\\
\leq&\, h_{q_0,v_\ast(q_0)}(q(t),t)-u_-(q(t))\leq u(t)-u_-(q(t))\\
=&\,[v_\ast(q_0)-u_-(q_0)]-\delta t+o(t)\\
<&\,v_\ast(q_0)-u_-(q_0),
\end{align*}
which contradicts \eqref{min-3}.

\vspace{1em}
$(c')$: The proof is completely similar to that of Theorem \ref{homotopy1} (c).
\end{proof}

\section{Sample examples and applications of the main theorems}
This section includes direct consequences of the connecting mechanisms as well as interpretations of them on certain classes of contact systems. As concluding remarks, we shall discuss the relationship of our results with those obtained in the paper \cite{EP}.

\subsection{Monotone Hamiltonian}
Let us begin with the class of contact Hamiltonian $H$ that are strictly increasing in $u$, i.e.,
\begin{itemize}
  \item [\textbf{(M)}] $\frac{\partial H}{\partial u}(q,p,u)>0$ for every $(q,p,u)\in\Sigma$.
\end{itemize}
These Hamiltonian could be seen as a generalization of discounted Hamiltonian and the corresponding systems model the motions of particles in mechanical systems with friction. For a contact Hamiltonian system defined by monotone Hamiltonian, we could obtain the following conclusion which relates to the fact that $\Lambda_-$ is part of the maximal attractor for $\varphi^t_H$ on $\Sigma$.
\begin{corollary}\label{connect-mono}
Assume $H$ satisfies \textbf{(H2)} and \textbf{(M)} and there is a $C^\infty$ function $u_-:M\rightarrow\R$ such that $(H,\Lambda_-)$ constitutes a system. Then for every $C^\infty$ function $u_0$, there is $\sigma_0\in\Lambda_0$ such that $\omega(\sigma_0)\subset\Lambda_-$.
\end{corollary}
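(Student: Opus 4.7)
The plan is to apply Theorem~\ref{main1}(c) by exploiting the strict monotonicity of $H$ in $u$ to cook up trivial sub- and super-deformations of $u_-$ out of vertical translates.

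First I would pick a constant $C>0$ large enough that
\[
C \;\geqslant\; \|u_0-u_-\|_\infty,
\]
which is finite since $M$ is compact and $u_0,u_-$ are smooth. Then I define the homotopies
\[
\underline{V}(q,s) := u_-(q) - (1-s)C, \qquad \overline{V}(q,s) := u_-(q) + (1-s)C,
\]
both of which are smooth on $M\times[0,1]$ and satisfy $\underline{V}(\cdot,1)=\overline{V}(\cdot,1)=u_-$. By the choice of $C$,
\[
\underline{V}(\cdot,0)=u_--C\;\leqslant\;\min\{u_0,u_-\},\qquad \overline{V}(\cdot,0)=u_-+C\;\geqslant\;\max\{u_0,u_-\}.
\]

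Next I would verify the sign conditions in Definition~\ref{sub-super-intro}. Because the translates differ from $u_-$ only by a function of $s$, one has $d_q\underline{V}(q,s)=d_q\overline{V}(q,s)=d_q u_-(q)$ for every $(q,s)$. Hence for any $s\in[0,1)$,
\[
H\big|_{\Lambda_{\underline{V}(\cdot,s)}}(q) \;=\; H\bigl(q,d_qu_-(q),\,u_-(q)-(1-s)C\bigr),
\]
and since $(1-s)C>0$, monotonicity \textbf{(M)} gives
\[
H\bigl(q,d_qu_-(q),\,u_-(q)-(1-s)C\bigr) \;<\; H\bigl(q,d_qu_-(q),u_-(q)\bigr)\;=\;0,
\]
using that $u_-$ solves \eqref{HJs}. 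So $\underline{V}$ is a sub-deformation of $u_-$. The identical reasoning with the sign reversed shows $\overline{V}$ is a super-deformation.

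Finally I would invoke Theorem~\ref{main1}(c) with these $\underline{V}$ and $\overline{V}$: the hypothesis of case (c) is exactly what has been established, and the conclusion provides $\sigma_0\in\Lambda_0$ with $\omega(\sigma_0)\subset\Lambda_-$. There is essentially no obstacle here: the whole point of the corollary is that monotone Hamiltonians automatically admit the two-sided homotopy squeeze required by Theorem~\ref{main1}(c), so the result reduces to checking that vertical translation by a constant preserves the $1$-jet structure and flips the sign of $H$ in the correct direction. The only minor detail is ensuring the $C^\infty$-regularity required by Definition~\ref{sub-super-intro}, which is immediate for $u_- \pm (1-s)C$.
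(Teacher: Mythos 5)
Your argument is correct and is essentially the same as the paper's: the paper also takes vertical translates $u_- \pm (1-s)c$ with $c$ bounding $\|u_0-u_-\|_\infty$, verifies they are sub/super-deformations via the strict $u$-monotonicity, and squeezes $T^-_t u_0$ between them; the only cosmetic difference is that the paper cites Theorems~\ref{homotopy1} and \ref{graph-graph1} directly rather than the packaged Theorem~\ref{main1}(c). Your explicit check that (M) forces the sign of $H$ along the translated graphs is a welcome amplification of a step the paper leaves implicit.
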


\begin{proof}
Due to compactness of $M$, there is $c>0$ such that for all $q\in M$,
\[
u_-(q)-c\leq u_0(q)\leq u_-(q)+c.
\]
Notice that $V_-(q,t)($ resp.$V_+(q,t))$ $:M\times[0,1]\rightarrow\R$ defined by
\[
V_-(q,t)=u_-(q)-(1-t)c,\quad(\text{resp.}\,\,V_+(q,t)=u_-(q)+(1-t)c)
\]
are a sub (resp. super)-deformation of $u_-$ respectively with $V_\pm(q,0)=u_-(q)\pm c$. Then by the monotonicity of solution semigroup, we have for $t\geq0$,
\[
T^-_t V_{-}(q,0)\leq T^-_t u_0(q)\leq T^-_t V_{+}(q,0)
\]
By Theorem \ref{homotopy1}, we have
\[
\lim_{t\rightarrow\infty}T^-_t V_{\pm}(q,0)=u_-(q)\quad\text{and so}\quad\lim_{t\rightarrow\infty}T^-_t u_0(q)=u_-(q).
\]
Now we apply Theorem \ref{graph-graph1} to complete the proof.
\end{proof}

\subsection{Contact M\"{o}bius model}
In this part, we apply our connecting mechanism to give an analysis of an interesting model raised in \cite{EP}.
\begin{example}\cite[Example 2.12]{EP}\label{contact Mobius}
Let $M=\mathbb{S}^1$ and $\Sigma=J^1 M$ the corresponding phase space with the canonical contact structure defined in the introduction. The Hamiltonian $F_{\mathcal{M}}:\Sigma\rightarrow\R$ defined by
\[
F_{\mathcal{M}}(q,p,u)=p^2+u^2-1
\]
induced an integrable contact Hamiltonian flow on $\Sigma$. Precisely, since the Hamiltonian is independent of $q$, the contact Hamiltonian vector field \eqref{ch} can be projected to the $(p,u)$-plane as
\[
\begin{cases}
\dot{p}=-2pu,\\
\dot{u}=p^2-u^2+1.
\end{cases}
\]
If one introduce the complex coordinate $z=u+\sqrt{-1}\,p$ on the $(p,u)$-plane with the real cylinder $l=\{p=0\}$, the flow defined by \eqref{cm-model} is described as the one-parameter subgroup of the M\"{o}bius transformations $PSL(2,\R)$ admitting an unstable fixed point at $w=-1$ and a stable point at $w=1$. In the complex coordinates, the solutions reads as
\begin{equation}\label{cm-model}
w(t)=\frac{w(0)\cosh t+\sinh t}{w(0)\sinh t+\cosh t},\quad w(0)=u(0)+\sqrt{-1}\,p(0).
\end{equation}
From the above formula, one could see that the phase flow of \eqref{cm-model} is incomplete. For $\epsilon>0$, we choose a cut-off function $a:[0,+\infty)\rightarrow[-1,+\infty)$ with $a'(s)>0$ for all $s$ and
\[
\lim_{s\rightarrow\infty}a(s)=a_{\infty}>1,\quad a(s)=s-1,\quad\text{for}\,\,s\in[0,1+\epsilon],
\]
to construct a new Hamiltonian $H_{\mathcal{M}}(q,p,u)=a(p^2+u^2)$ to make the flow complete with the dynamics in the disk $\{p^2+u^2<1+\epsilon\}$ unchanged. The authors focus on the following fact since it contains some ingredients for the mechanism of their constructions \cite[Theorem 2.9]{EP}.

\begin{itemize}
  \item [$(\clubsuit)$] Along the real cylinder $l=\{p=0\}$, for $c\in\R$, if we define Legendrian graphs
  \[
  \Lambda_-=\{(q,0,1)\,:\,q\in\mathbb{S}^1\},\quad \Lambda_c:=\{(q,0,c)\,:\,q\in\mathbb{S}^1\},
  \]
  then $\Lambda_-$ is a \textbf{local attractor} for $\varphi^t_H$ and $\Lambda_c$ admits trajectories of the contact Hamiltonian flow starting on $K_c$ and converge asymptotically to $\Lambda_-$ \textbf{for $c>1$ but not for $c<-1$}.
\end{itemize}
\end{example}
The author have shown that the Legendrian submanifolds $(\Lambda_c,\Lambda_-)$ is interlinked for $c>1$ \textbf{but not for $c<-1$}. This fact explains why the connecting mechanism \cite[Theorem 2.9]{EP} works only for $c>1$.

\vspace{1em}
To give an interpretation of the fact $(\clubsuit)$ from our viewpoint, we shall focus on the dynamics in a neighborhood of the unit disk on $(p,u)$-plane. In this region, $H_{\mathcal{M}}=F_{\mathcal{M}}=u^2+p^2-1$ satisfying \textbf{(H2)}. According to description of $(\clubsuit)$, we divide the analysis into two cases: for an initial data $u_0\in C^\infty(\mathbb{S}^1,\R)$ with
\begin{enumerate}
  \item $u_0>-1$ (not necessarily constant) and $\Lambda_0\subset\{p^2+u^2<1+\epsilon\}$, then
        \begin{equation}\label{eq:order2}
        -1<\underline{U}:=\min_{q\in M}\{u_0(q), u_{-}(q)\}\leqslant u_0 \leqslant\overline{U}:= \max_{q}\{u_0(q), u_{-}(q)\} \leqslant\sqrt{1+\epsilon}.
        \end{equation}
        Then one easily constructs
        \begin{itemize}
        \item a sub-deformation
        $
        \underline{V}:M\times[0,1]\rightarrow\R,\quad\underline{V}(q,\lambda)=(1-\lambda)\underline{U}+\lambda,
        $
        \item a super-deformation
        $
        \overline{V}:M\times[0,1]\rightarrow\R,\quad\overline{V}(q,\lambda)=(1-\lambda)\overline{U}+\lambda.
        $
        \end{itemize}
        Now we apply Theorem \ref{main1} (c) to get
        \begin{enumerate}[(1)]
        \item $\Lambda_-\subset\overline{\cup_{t\geqslant0}\varphi^t_H(\Lambda_0)}$,
        \item and there is $\sigma_0\in\Lambda_0$ such that $\omega(\sigma_0)\subset\Lambda_-$.
        \end{enumerate}

  \item $u_0\equiv c<-1$ and $\Lambda_0\subset\{p^2+u^2<1+\epsilon\}$, then $u_0<u_-$ and any $C^\infty$ function $V:M\times[0,1]\rightarrow\R$ with $V(\cdot,0)\leqslant u_0$ satisfies
        \[
        H_{\mathcal{M}}|_{\Lambda_{V(\cdot,0)}}>0,\quad H_{\mathcal{M}}(\cdot,0,V(\cdot,0))>0.
        \]
        From these facts, it is easily deduced that the deformations listed in the conditions $(a)-(c),(a')-(c')$ of Theorem \ref{main1}-\ref{main2} \textbf{do not exist}, thus showing the necessity of these conditions.
\end{enumerate}

By studying the phase portrait of the system defined by $H_{\mathcal{M}}$, we found that if the initial data $u_0<-1$ \textbf{but is not constant} on $\mathbb{S}^1$, there is an semi-infinite orbit initiating from $\Lambda_0$ and converge asymptotically to $\Lambda_-$. This phenomenon is detected neither by the mechanisms formulated in \cite{EP}, since in this case $(\Lambda_0,\Lambda_-)$ is not interlinked, nor by our results. So it is natural to ask

\begin{question}
Is there an abstract mechanism, for some suitable setting including Example \ref{contact Mobius} as a special case, for the existence of such orbits?
\end{question}

\section*{Acknowledgments}

All of the author are supported in part by the National Natural Science Foundation of China (Grant No. 12171096). L. Jin is also supported in part by the NSFC (Grant No. 11901293, 11971232). J. Yan is also supported in part by the NSFC (Grant No. 11790272). The first author would like to thank Dr. S.Suhr for his kind invitation and RUB (Ruhr-Universit\"{a}t Bochum) for its hospitality, where part of this work is done.

\medskip


\end{document}